\renewenvironment{proof}[1][\proofname]{%
   \par\pushQED{\qed}\normalfont%
   \topsep6\p@\@plus6\p@\relax
   \trivlist\item[\hskip\labelsep\bfseries#1\@addpunct{.}]%
   \ignorespaces
}{%
   \popQED\endtrivlist\@endpefalse
}
\newtheorem{theorem}{Theorem}
\newtheorem{proposition}[theorem]{Proposition}
 \numberwithin{theorem}{section}
 \newtheorem{corollary}[theorem]{Corollary}
\newtheorem{lemma}[theorem]{Lemma}
\newtheorem{remark}[theorem]{Remark}
\numberwithin{equation}{section}
\renewcommand{\P}{\mathbb{P}}
\newcommand{\E}{\mathbb{E}}
\newcommand{\R}{\mathbb{R}}
\newcommand{\mR}{\mathcal{R}}
\newcommand{\pmR}{\partial\mathcal{R}}
\newcommand{\N}{\mathbb{N}}
\newcommand{\cA}{\mathcal A}
\newcommand{\cN}{\mathcal{N}}
\newcommand{\cL}{\mathcal{L}}
\newcommand{\cF}{\mathcal F}
\newcommand{\cO}{\mathcal O}
\newcommand{\cE}{\mathcal E}
\newcommand{\cW}{\mathcal W}
\newcommand{\eps}{\varepsilon}
\newcommand{\veps}{\varepsilon}
 \newcommand{\nn}{\nonumber}
 \newcommand{\no}{\noindent}
\newcommand{\Extra}[1]{{\color{blue}#1}}
\renewcommand{\Extra}[1]{}
\begin{document}
\author{
Jieliang Hong\footnote{Department of Mathematics, University of British Columbia, Canada, E-mail: {\tt jlhong@math.ubc.ca} }
}
\title{Exit measure, local time and a boundary local time of super-Brownian motion}
\date{\today}
\maketitle
\begin{abstract}
    We use a renormalization of the total mass of the exit measure from the complement of a small ball centered at $x\in \R^d$ for $d\leq 3$ to give a new construction of the total local time $L^x$ of super-Brownian motion at $x$. In \cite{Hong20} a more singular renormalization of the total mass of the exit measure concentrating on $x$, where the exit measure is positive but unusually small, is used to build a boundary local time supported on the topological boundary of the range of super-Brownian motion. Our exit measure construction of $L^x$ motivates this renormalization. We give an important step of this construction here by establishing the convergence of the associated mean measure to an explicit limit; this will be used in the construction of the boundary local time in \cite{Hong20}. Both our results rely on the behaviour of solutions to the associated semilnear elliptic equation with singular initial data and on Le Gall's special Markov property for exit measures.
\end{abstract}

\section{Introduction and main results}

Let $M_F=M_F(\R^d)$ be the space of finite measures on $(\R^d,\mathfrak{B}(\R^d))$ equipped with the topology of weak convergence of measures, and write $\mu(\phi)=\int \phi(x) \mu(dx)$ for $\mu \in M_F$. A super-Brownian motion (SBM) $(X_t, t\geq 0)$ starting at $\mu \in M_F$ is a continuous $M_F$-valued strong Markov process defined on some filtered probability space $(\Omega,\cF,\cF_t,P)$ with $X_0=\mu$ a.s., which arises as the unique solution to the following $\mathit{martingale\ problem}$ (see \cite{Per02}, II.5): For any $ \phi \in C_b^2(\R^d)$, 
\begin{equation} 
X_t(\phi)=X_0(\phi)+M_t(\phi)+\int_0^t X_s(\frac{\Delta}{2}\phi) ds,
\end{equation} where $M_t(\phi)$ is a continuous $\cF_t$-martingale such that $M_0(\phi)=0$ and  \[[M(\phi)]_t=\int_0^t X_s(\phi^2) ds.\]
Here $C_b^2(\R^d)$ is the space of bounded functions which are twice continuously differentiable. The above martingale problem uniquely characterizes the law $\P_{X_0}$ of super-Brownian motion $X$, starting from $X_0 \in M_F$, on $C([0,\infty),M_F)$, the space of continuous functions from $[0,\infty)$ to $M_F$ furnished with the compact-open topology.

%
%
We know that the extinction time of $X$ is a.s. finite (see, e.g., Chp II.5 in \cite{Per02}). The total occupation time measure of $X$ is the (a.s. finite) measure defined as
 \begin{eqnarray*}
I(A)=\int_0^\infty X_s(A)ds.
\end{eqnarray*}
Let $S(\mu)=\text{Supp}(\mu)$ denote the closed support of a measure $\mu$. We 
define the range, $\mR$, of $X$ to be 
$\mR=\textnormal{Supp}(I).$
In dimensions $d\leq 3$, the occupation measure $I$ has a density, $L^x$, which is called (total) local time of $X$, that is,
$$I(f)=\int_0^\infty X_s(f)\,ds=\int_{\R^d} f(x)L^x\,dx\text{ for all  non-negative measurable }f.$$
Moreover, $x\mapsto L^x$ is lower semicontinuous, is continuous on $S(X_0)^c$, and  for $d=1$ is globally continuous (see Theorems~2 and 3 of \cite{Sug89}). Thus one can see that in dimensions $d\leq 3$,
\begin{equation}\label{er1.0}
\mR=\overline{\{x:L^x>0\}},
\end{equation}
and $\mR$ is a closed set of positive Lebesgue measure. In dimensions $d\ge 4$, $\mR$ is a Lebesgue null
set of Hausdorff dimension $4$ for SBM starting from $\delta_0$ (see Theorem~1.4 of \cite{DIP89}), which explains our restriction to $d\le 3$ in this work. The Laplace transform of $L^x$ derived in Lemma 2.2 of \cite{MP17} is given by
\begin{equation}\label{ev1.0}
\E_{X_0}\Big(\exp(-\lambda L^x)\Big)=\exp\left(-\int_{\R^d} V^\lambda(x-y)X_0(dy)\right),
\end{equation}
where $V^\lambda$ is the unique solution (see Section~2 of \cite{MP17} and the references given there) to 
\begin{equation}\label{ev1.1}
\frac{\Delta V^\lambda}{2}=\frac{(V^\lambda)^2}{2}-\lambda\delta_0,\ \ V^\lambda>0\text{ on }\R^d.
\end{equation}
Let $\lambda\uparrow\infty$ in~\eqref{ev1.0} and \eqref{ev1.1} to see that $V^\lambda(x)\uparrow V^\infty(x)$ where
\begin{equation}\label{ev1.2}
\P_{X_0}(L^x=0)=\exp\Big(-\int V^\infty(x-y) X_0(dy)\Big).
\end{equation}
It is explicitly known that (see, e.g., (2.17) in \cite{MP17}):
\begin{equation}\label{ev1.3}
V^\infty(x)=\frac{2(4-d)}{|x|^2}:=\frac{\lambda_d}{|x|^2},
\end{equation}
and in particular $V^\infty$ solves
\begin{equation}\label{ev1.4}
\frac{\Delta V^\infty}{2}=\frac{(V^\infty)^2}{2}\text{ for }x\neq 0.
\end{equation}
%
%

%

It is also natural to consider SBM under the canonical measure $\N_{x_0}$. Recall from Section II.7 in \cite{Per02} that $\N_{x_0}$ is a $\sigma$-finite measure on $C([0,\infty),M_F)$ such that if we let
$\Xi=\sum_{i\in I}\delta_{\nu^i}$ be a Poisson point process on $C([0,\infty),M_F)$ with intensity $\N_{X_0}(d\nu)=\int \N_{x}(d\nu) X_0(dx)$, then
\begin{equation}\label{ev2.1}
X_t=\sum_{i\in I}\nu_t^i=\int \nu_t\ \Xi(d\nu),\ t>0, 
\end{equation}
has the law, $\P_{X_0}$, of a super-Brownian motion $X$ starting from $X_0$. In this way, $\N_{x_0}$ describes the contribution of a cluster from a single ancestor at $x_0$ and the super-Brownian motion is then obtained by a Poisson superposition of such clusters. 
We refer the readers to Theorem II.7.3(c) in \cite{Per02} for more details. The existence of the local time $L^x$ under $\N_{x_0}$ will follow from this decomposition and the existence under $\P_{\delta_{x_0}}$. Therefore the local time $L^x$ under $\P_{X_0}$ may be decomposed as 
\begin{align}\label{e1.21}
L^x=\sum_{i\in I}L^x(\nu^i)=\int L^x(\nu)\Xi(d\nu).
\end{align}
The global continuity of local times $L^x$ under $\N_{x_0}$ is given in Theorem 1.2 of \cite{Hong18}. Apply \eqref{e1.21} in \eqref{ev1.0} and \eqref{ev1.2} to get $\N_{x_0}(1-e^{-\lambda L^x})=V^\lambda(x-x_0)$ and $\N_{x_0}(L^x>0)=V^\infty(x-x_0)$ for all $x\neq x_0$.

Turning to the exit measure, we let $d(x,K)=\inf\{|x-y|: y\in K\}$ and for $K_1,K_2$ non-empty, we set 
\[d(K_1,K_2)=\inf\{|x-y|: x\in K_1, y\in K_2\}.\]
Define
\begin{align} \label{Gdef}
\cO_{X_0}\equiv &\{ \text{open sets }  D \text{ satisfying } d(D^c,S(X_0))>0\text{ and a Brownian }\\ \nn&\text{  
 path 
starting from any $x\in\partial D$ will exit $D$ immediately}\}.
\end{align}
In what follows we always assume that $G\in \cO_{X_0}$. The exit measure of $X$ from an open set $G$, under $\P_{X_0}$ or $\N_{X_0}$, is a random finite measure supported on $\partial G$ and is denoted by $X_G$ (see Chp. V of \cite{Leg99} for this and the construction of the exit measure). Intuitively $X_{G}$ corresponds to the mass started at $X_0$ which is stopped at the instant it leaves $G$. We note \cite{Leg99} also suffices as a reference for the properties of $X_G$ described below.
The Laplace functional of $X_G$ is given by 
\begin{align}\label{e7.1}
\E_{X_0}\Big(e^{-X_G(g)}\Big)=\exp\Bigl(-\N_{X_0}\big(1-\exp(-X_G(g))\big)\Bigr)= \exp\Bigl(-X_0(U^g)\Bigr),
\end{align}
where $g:\partial G\to[0,\infty)$ is continuous and $U^g\ge 0$ is the unique continuous function on $\overline G$ which is $C^2$ on $G$ and solves
\begin{equation}\label{e7.2}
\Delta U^g=(U^g)^2\text{ on }G,\quad U^g=g\text{ on }\partial G.
\end{equation}
Let $B(x_0, \eps)=B_\eps(x_0)=\{x:|x-x_0|<\varepsilon\}$ and set $B_\eps=B(\eps)=B_\eps(0)$. Similarly we define
\begin{equation}\label{e0.0.1}
G_\varepsilon^{x_0}=G_\varepsilon(x_0)=\{x:|x-x_0|>\varepsilon\} \text{ and set } G_\veps=G_\veps(0).
\end{equation}
For $\veps>0$ and $\lambda\geq 0$, we let $U^{\lambda,\veps}$ denote the unique continuous function on $\{|x|\ge \veps\}$ such that (cf. \eqref{e7.2})
\begin{equation}\label{ev5.2}
\Delta U^{\lambda,\veps}=(U^{\lambda,\veps})^2\ \ \text{for }|x|>\veps,\ \ \text{ and }\  U^{\lambda,\veps}(x)=\lambda\ \ \text{for }|x|=\veps.
\end{equation}
Uniqueness of solutions implies the scaling property
\begin{equation}\label{ev5.3}
U^{\lambda,\veps}(x)=\veps^{-2} U^{\lambda \veps^2,1}(x/\veps)\quad\text{for all }|x|\ge\veps,
\end{equation}
and also shows $U^{\lambda, \veps}$ is radially symmetric, thus allowing us to write $U^{\lambda,\veps}(|x|)$ for the value at $x\in\R^d$. 
 By \eqref{e7.1}, we have for any initial condition $X_0\in M_F$ satisfying $S(X_0)\subset G_\veps$,
\begin{equation}\label{ev5.4}
\E_{X_0}\Big(e^{-\lambda X_{G_\veps}(1)}\Big)=\exp\Bigl(-\N_{X_0}\big(1-\exp(-\lambda X_{G_\veps}(1))\big)\Bigr)=\exp(-X_0(U^{\lambda, \veps})).
\end{equation}
Let $\lambda\uparrow\infty$ in the above to see that $U^{\lambda,\veps}\uparrow U^{\infty,\veps}$ on $G_\veps$ and 
\begin{equation}\label{ev5.5}
\P_{X_0}(X_{G_\veps}(1)=0)=\exp(-X_0(U^{\infty,\veps})).
\end{equation}
Proposition 9(iii) of \cite{Leg99} readily implies (see also (3.5) and (3.6) of \cite{MP17})
\begin{align}\label{ev5.6}
U^{\infty,\veps}\text{ is }C^2\text{ and }&\Delta U^{\infty,\veps}=(U^{\infty,\veps})^2\text{ on }G_\veps,\\
 \nonumber &\lim_{|x|\to\veps,|x|>\veps}U^{\infty,\veps}(x)=+\infty,\ \lim_{|x|\to\infty}U^{\infty,\veps}(x)=0.
\end{align}

 It has been shown in Proposition 6.2(b) of \cite{HMP18} that for any $x\in S(X_0)^c$, under $\N_{X_0}$ or $\P_{X_0}$, the family $\{X_{G_{r_0-r}^x}(1), 0\leq r<r_0\}$ with $r_0=d(x,S(X_0))/2$ has a cadlag version which is a supermartingale if $d=3$, and is a martingale if $d=2$. Throughout the rest of the paper, we will always work with this cadlag version. For any $\eps>0$, we let 
 \begin{align}\label{e3.1}
 \psi_0(\eps)=
\begin{cases}
\frac{1}{\pi}\log^{+}(1/\eps), &\text{ in } d=2,\\
\frac{1}{2\pi}\frac{1}{\eps}, &\text{ in } d=3.
\end{cases}
 \end{align}
The following result gives a new construction of the local time $L^x$ in terms of the local asymptotic behavior of the exit measures at $x$, whose proof will be given in Section \ref{s3}.

 \begin{theorem}\label{t0}
 Let $d=2$ or $d=3$. For any $x\in S(X_0)^c$, we have
\begin{equation}
X_{G_\eps^x}(1)\psi_0(\eps) \to L^x \text{ in measure under }   \N_{X_0} \text{ or } \P_{X_0} \text{ as } \eps \downarrow 0,
\end{equation}
where $\psi_0$ is as in \eqref{e3.1}.
 Moreover, in $d=3$ the convergence holds  $\N_{X_0}$-a.e. or $\P_{X_0}$-a.s.
\end{theorem}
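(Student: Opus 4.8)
The plan is to identify the limit of $X_{G_\eps^x}(1)\psi_0(\eps)$ through its Laplace functional and then upgrade the resulting convergence in law to convergence in measure (and, in $d=3$, to an a.e./a.s. statement via the supermartingale/martingale structure). Fix $x\in S(X_0)^c$ and $r_0=d(x,S(X_0))/2$. First I would work under $\N_{X_0}$ (the $\P_{X_0}$ statement then follows by the Poisson superposition \eqref{ev2.1}, since sums of independent copies converge in measure when the summands do and the Laplace functionals multiply). By \eqref{ev5.4}, for $\lambda\ge 0$,
\begin{equation*}
\N_{X_0}\bigl(1-\exp(-\lambda\psi_0(\eps)\,X_{G_\eps^x}(1))\bigr)=X_0\bigl(U^{\lambda\psi_0(\eps),\eps}(\cdot-x)\bigr),
\end{equation*}
so the whole problem reduces to showing that $U^{\lambda\psi_0(\eps),\eps}(y-x)\to V^\lambda(y-x)$ for each $y\in S(X_0)$ (with $V^\lambda$ the solution of \eqref{ev1.1}), together with a domination bound allowing us to pass the limit through $X_0$. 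Granting this, $\N_{X_0}(1-e^{-\lambda\psi_0(\eps)X_{G_\eps^x}(1)})\to X_0(V^\lambda(\cdot-x))=\N_{X_0}(1-e^{-\lambda L^x})$ by \eqref{ev1.0}–\eqref{e1.21}, which gives convergence in $\N_{X_0}$-measure to $L^x$.

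The analytic core is therefore the asymptotics of $U^{\lambda\psi_0(\eps),\eps}$, and I expect this to be the main obstacle. The key observation is the scaling identity \eqref{ev5.3}: $U^{\mu,\eps}(y)=\eps^{-2}U^{\mu\eps^2,1}(y/\eps)$, so with $\mu=\lambda\psi_0(\eps)$ we are looking at $U^{\nu(\eps),1}$ evaluated at the far-away point $y/\eps$, where $\nu(\eps)=\lambda\eps^2\psi_0(\eps)$ is the relevant "effective" boundary height on the unit sphere. In $d=2$, $\eps^2\psi_0(\eps)=\eps^2\log^+(1/\eps)/\pi\to 0$, and in $d=3$, $\eps^2\psi_0(\eps)=\eps/(2\pi)\to 0$, so in both cases $\nu(\eps)\to 0$: the solution on the unit domain has a boundary value tending to zero. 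One then needs the precise rate at which $U^{\nu,1}(z)$ decays as $|z|\to\infty$ for small boundary data $\nu$ — this is a linearized (Green's function) computation for the exterior problem $\Delta U=U^2$ on $\{|z|>1\}$, $U=\nu$ on $\{|z|=1\}$: to leading order $U^{\nu,1}(z)\approx \nu\, h(z)$ where $h$ is the bounded harmonic function on $G_1$ with boundary value $1$ (so $h(z)=\log|z|^{?}$... in fact $h(z)=|z|^{-1}$ in $d=3$ and the appropriate logarithmic profile in $d=2$), and the normalization $\psi_0$ is chosen exactly so that $\eps^{-2}\nu(\eps)\,h(y/\eps)\to \lambda_d/|y-x|^2 \cdot(\text{const})$... more precisely so that the product reproduces $V^\lambda(y-x)$. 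I would make this rigorous by sandwiching $U^{\lambda\psi_0(\eps),\eps}$ between sub- and supersolutions built from $V^\lambda$ and from $U^{\infty,\eps}$ (whose behavior near $|x|=\eps$ and at infinity is recorded in \eqref{ev5.6} and, via \eqref{ev1.3}, is comparable to $\lambda_d/|x|^2$), using the comparison principle for \eqref{ev5.2}; the monotonicity $U^{\lambda,\eps}\le U^{\infty,\eps}$ furnishes the domination needed to interchange limit and $X_0$-integration.

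For the almost-everywhere/almost-sure refinement in $d=3$, I would invoke the cadlag supermartingale property of $r\mapsto X_{G_{r_0-r}^x}(1)$ from Proposition 6.2(b) of \cite{HMP18} quoted above. Reparametrizing $\eps=r_0-r$, the process $\eps\mapsto X_{G_\eps^x}(1)$ is a backward supermartingale as $\eps\downarrow0$; multiplying by $\psi_0(\eps)=1/(2\pi\eps)$ and using a time-change/integrability estimate one shows $X_{G_\eps^x}(1)\psi_0(\eps)$ converges $\N_{X_0}$-a.e. along the full limit $\eps\downarrow0$, not merely in measure — the candidate limit being forced to equal $L^x$ by the convergence in measure already established. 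I would control the requisite second-moment bound on $X_{G_\eps^x}(1)$ via \eqref{ev5.4} by differentiating the Laplace functional twice and using the $U^{\infty,\eps}$ bound, which keeps $\psi_0(\eps)X_{G_\eps^x}(1)$ uniformly bounded in $L^2$ as $\eps\downarrow0$; combined with the supermartingale convergence this yields the pointwise statement. The $d=2$ case only has a martingale for the unnormalized masses but the normalizer $\psi_0(\eps)=\frac1\pi\log^+(1/\eps)$ is slowly varying, and I do not expect an a.s. statement there — consistent with the theorem only claiming convergence in measure in $d=2$.
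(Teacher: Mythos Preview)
Your argument has a genuine gap at the step where you pass from convergence of Laplace functionals to convergence in measure. The statement
\[
\N_{X_0}\bigl(1-e^{-\lambda\psi_0(\eps)X_{G_\eps^x}(1)}\bigr)\longrightarrow \N_{X_0}\bigl(1-e^{-\lambda L^x}\bigr)\quad\text{for all }\lambda>0
\]
only yields convergence of $\psi_0(\eps)X_{G_\eps^x}(1)$ to $L^x$ \emph{in distribution} (on the finite-measure event where the range meets a neighbourhood of $x$). Since $L^x$ is genuinely random, this does not upgrade to convergence in measure: nothing in your argument couples the two random variables on the same realization of the snake. The circularity then propagates to your $d=3$ step --- even if martingale convergence produces an a.e.\ limit, you cannot identify that limit with $L^x$ without having convergence in measure first.

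The missing idea is precisely such a coupling, and the paper obtains it from the Special Markov Property (Proposition~\ref{pv0.2}(i)). Conditioning on $\cE_{G_\eps^x}$ and using that $X_{G_\eps^x}$ sits on $\partial B_\eps(x)$ together with the radial symmetry of $V^\lambda$, one gets
\[
\N_{X_0}\bigl(e^{-\lambda L^x}\,\big|\,\cE_{G_\eps^x}\bigr)=\E_{X_{G_\eps^x}}\bigl(e^{-\lambda L^x}\bigr)=\exp\bigl(-V^\lambda(\eps)\,X_{G_\eps^x}(1)\bigr),
\]
so the conditional Laplace transform of $L^x$ is a function of $X_{G_\eps^x}(1)$ itself. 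The paper then expands $\N_{X_0}\bigl((e^{-\lambda\psi_0(\eps)X_{G_\eps^x}(1)}-e^{-\lambda L^x})^2\bigr)$; after the conditioning, every term involves only $X_{G_\eps^x}(1)$ with exponents built from $\lambda\psi_0(\eps)$, $V^\lambda(\eps)$ and $V^{2\lambda}(\eps)$, and the whole expression is driven to zero by the near-origin asymptotic $V^\lambda(\eps)/\psi_0(\eps)\to\lambda$ of \eqref{ev2.2}. Note that the relevant PDE input is this singularity asymptotic for $V^\lambda$, not the far-field behaviour of $U^{\lambda\psi_0(\eps),\eps}$ that you were aiming for; your PDE programme would recover the distributional limit but cannot, by itself, deliver the theorem as stated.
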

\begin{remark}
In $d=3$, the family $\cA:=\{X_{G_{r_0-r}^x}(1)\psi_0(r_0-r), 0\leq r<r_0\}$ with \break $r_0=d(x,S(X_0))/2$ is indeed a martingale (see the proof of the above theorem in Section \ref{s3}). In $d=2$, we already have the family $\{X_{G_{r_0-r}^x}(1), 0\leq r<r_0\}$ is a martingale and so one can easily conclude that $\cA$ is a submartingale in $d=2$.
\end{remark}
A second result on the exit measure stems from a recent work on the construction of a boundary local time measure that is supported on the topological boundary of the range, $\pmR$, of SBM (see \cite{Hong20}). Let 
\begin{equation}\label{ep1.1}
p=p(d)=\begin{cases}
3 &\text{ if }d=1\\
2\sqrt 2 &\text{ if }d=2\\
\frac{1+\sqrt {17}}{2} &\text{ if }d=3,
\end{cases}
\text{ and } \alpha=\frac{p-2}{4-d}=\begin{cases}
1/3 &\text{ if }d=1\\
\sqrt 2-1 &\text{ if }d=2\\
\frac{\sqrt {17}-3}{2} &\text{ if }d=3.
\end{cases}
\end{equation}
For any $\lambda>0$, under $\N_{0}$ or $\P_{\delta_0}$ , in \cite{Hong20} we define a measure ${\cL}^\lambda$ by 
\begin{align}\label{e7.3}
d{\cL}^\lambda(x)=
 \lambda^{1+\alpha} L^x e^{-\lambda L^x} dx.
\end{align}
Theorem 1.3 of \cite{Hong20} gives that in $d=2$ or $3$, there exists a random measure $\cL\in M_F$ such that $\cL^\lambda$ converges in measure to $\cL$ under $\P_{\delta_0}$ or $\N_{0}$ as $\lambda \to \infty$. To prove that the support of $\cL$ is exactly $\pmR$, for any $\kappa, \eps>0$, under $\P_{\delta_0}$ or $\N_{0}$ we further define a second measure $\widetilde{\cL}(\kappa)^\eps$ in \cite{Hong20} by
\begin{equation}\label{edef}
d\widetilde{\cL}(\kappa)^\eps(x)=
\frac{X_{G_\eps^x}(1)}{\eps^p} \exp(-\kappa \frac{X_{G_\eps^x}(1)}{\eps^{2}} ) 1(X_{G_{\eps/2}^x}=0) 1(|x|>\eps)dx.
 \end{equation}
The indicator function $1(|x|>\eps)$ is to ensure that $X_{G_\eps^x}$ is well defined. The extra indicator $1(X_{G_{\eps/2}^x}=0)$ is to ensure that the limiting measure will be supported on $\pmR$ rather than the bigger set $F$, the boundary of the zero set of $L^x$ (see (1.8) of \cite{HMP18}). 

We write $f(t)\sim g(t)$ as $t\downarrow 0$ iff $f(t)/g(t)$ is bounded below and above by constants $c, c'>0$ for small positive $t$. One can deduce from Theorem \ref{t0} that $\widetilde{\cL}(\kappa)^\eps$ is closely related to ${\cL}^\lambda$: for example in $d=3$, we have $\psi_0(\eps)=1/(2\pi \eps)$, and then by Theorem \ref{t0} we have $X_{G_\eps^x}(1) \sim \eps L^x$ as $\eps \downarrow 0$. Hence if $\lambda=\kappa \eps^{-1}$,
\begin{align}
\frac{X_{G_\eps^x}(1)}{\eps^p} \exp(-\kappa\frac{X_{G_\eps^x}(1)}{\eps^{2}})\sim \eps^{1-p} L^x e^{-\kappa \eps^{-1} L^x}\sim \lambda^{1+\alpha} L^x e^{-\lambda L^x} \text{ as } \eps \downarrow 0,
\end{align}
where in the last approximation we have used the fact that $\alpha=p-2$ in $d=3$.  
 In fact Theorem 1.13 of \cite{Hong20} readily implies that in $d=2$ or $3$, there is some constant $c(\kappa)>0$ so that $\widetilde{\cL}(\kappa)^\eps$ converges in measure to $c(\kappa)\cL$ under $\P_{\delta_0}$ or $\N_{0}$ as $\eps\downarrow 0$. In this paper we prove the convergence of the mean measure of $\widetilde{\cL}(\kappa)^\eps$ as $\eps\downarrow 0$, which will play a role in the convergence of $\widetilde{\cL}(\kappa)^\eps$ to $c(\kappa)\cL$  in \cite{Hong20}. We briefly include $d=1$ in our results.\\

\no ${\bf Convention\ on\ Functions\ and\ Constants}$ Constants whose value is unimportant and may change from line to line are denoted $C, c, c_1,c_2,\dots$, while constants whose values will be referred to later and appear initially in say, Lemma~i.j are denoted $c_{i.j},$ or $ \underline c_{i.j}$ or $C_{i.j}$. 


\begin{theorem}\label{t1}
Let $d\leq 3$ and $X_0\in M_F(\R^d)$. For any $\kappa>0$, there is some constant \mbox{$C_{\ref{t1}}(\kappa)>0$} such that for all $x\neq 0$,
 \begin{align}\label{ce9.3.0}
&\lim_{\eps \downarrow 0} \N_0\Big(\frac{X_{G_\eps^x}(1)}{\eps^p} \exp(-\kappa \frac{X_{G_\eps^x}(1)}{\eps^{2}})1(X_{G_{\eps/2}^x}=0) \Big)=C_{\ref{t1}}(\kappa) |x|^{-p}, 
\end{align}
and 
 \begin{align}\label{ce9.1.0}
& \N_0\Big(\frac{X_{G_\eps^x}(1)}{\eps^p} \exp(-\kappa \frac{X_{G_\eps^x}(1)}{\eps^{2}})1(X_{G_{\eps/2}^x}=0) \Big)\leq |x|^{-p}, \ \forall 0<\eps<|x|.
\end{align}
For any $x\in S(X_0)^c$, we have
 \begin{align}\label{cea9.1.0}
\lim_{\eps \downarrow 0} \E_{X_0}&\Big(\frac{X_{G_\eps^x}(1)}{\eps^p} \exp(-\kappa \frac{X_{G_\eps^x}(1)}{\eps^{2}})1(X_{G_{\eps/2}^x}=0) \Big)\nn\\
&= e^{-\int V^\infty(y-x)X_0(dy)}C_{\ref{t1}}(\kappa) \int |y-x|^{-p} X_0(dy).
\end{align}
 and
 \begin{align}\label{cea9.3.0}
 &\E_{X_0}\Big(\frac{X_{G_\eps^x}(1)}{\eps^p} \exp(-\kappa \frac{X_{G_\eps^x}(1)}{\eps^{2}})1(X_{G_{\eps/2}^x}=0) \Big)\nn\\
&\leq  \int |y-x|^{-p} X_0(dy),\ \forall  0<\eps<d(x,S(X_0)).
\end{align}
\end{theorem}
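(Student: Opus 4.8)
The plan is to reduce everything to the asymptotics of a single scaled deterministic quantity under $\N_0$, and then pass to $\P_{X_0}$ by the Poisson cluster representation. First I would introduce, for $\lambda,\kappa>0$ and $\eps>0$, the Laplace-type functional
\[
F_\eps(\lambda,\kappa; x)=\N_0\Big(\exp(-\lambda X_{G_\eps^x}(1))\,1(X_{G_{\eps/2}^x}=0)\Big),
\]
so that the quantity in \eqref{ce9.3.0} is $-\partial_\lambda F_\eps(\lambda,\kappa;x)$ evaluated appropriately; more precisely, writing $h(u)=u e^{-\kappa u/\eps^2}$, the integrand is $\eps^{-p}h(X_{G_\eps^x}(1))$, and $\N_0(h(X_{G_\eps^x}(1))1(X_{G_{\eps/2}^x}=0))$ can be recovered from $F_\eps$ by differentiating in $\lambda$ at $\lambda=\kappa/\eps^2$. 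The key structural input is Le Gall's special Markov property applied to the open set $G_{\eps/2}^x$: conditionally on $X_{G_{\eps/2}^x}$, the exit measure $X_{G_\eps^x}$ (from the larger set $G_\eps^x\supset G_{\eps/2}^x$) is built from an independent Poisson system of excursions off $\partial G_{\eps/2}^x$, together with the "direct" contribution from the ancestor at $0$ that never enters $B_{\eps/2}(x)$. On the event $\{X_{G_{\eps/2}^x}=0\}$ only the direct contribution survives, and its Laplace functional is governed by the solution of a semilinear elliptic equation on the annular-type region $G_\eps^x\cap B_{\eps/2}(x)^c$-complement — i.e. by a function closely related to $U^{\lambda,\eps}$ with an absorbing (Dirichlet) condition on $\partial B_{\eps/2}(x)$. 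This is exactly the regime studied in \cite{Hong20}, so I would cite the relevant PDE estimates there.

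The second step is the scaling analysis. By translation we may take $x$ fixed and by the scaling property \eqref{ev5.3} (and its analogue for the mixed Dirichlet/elliptic problem) everything reduces to $\eps=1$ up to explicit powers of $\eps$: the natural scaling sends $X_{G_\eps^x}(1)$ to $\eps^{-2}$ times a fixed-$\eps=1$ exit mass and the spatial variable $x$ to $x/\eps$. The choice of exponent $p=p(d)$ in \eqref{ep1.1} is precisely the one that makes $\eps^{-p}\N_0(h(X_{G_\eps^x}(1))1(\cdots))$ converge to a nonzero finite constant times $|x|^{-p}$ as $\eps\downarrow 0$: one writes the functional as $|x|^{-p}$ times a function of $\eps/|x|$, and then shows this function has a positive finite limit as $\eps/|x|\to 0$. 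Concretely I expect the limit to come from an expansion of the form $\N_0(1-\exp(-\lambda X_{G_\eps^x}(1))1(X_{G_{\eps/2}^x}=0))$, whose leading behaviour as $\eps\to 0$ is computed from the asymptotics of the associated elliptic solution near its singularity; differentiating in $\lambda$ and inserting $\lambda=\kappa\eps^{-2}$ produces $C_{\ref{t1}}(\kappa)\eps^p|x|^{-p}+o(\eps^p)$, with $C_{\ref{t1}}(\kappa)$ an explicit (but unimportant) integral in $\kappa$. The uniform bound \eqref{ce9.1.0} should follow from the same representation together with a crude monotone/comparison estimate: dropping the indicator $1(X_{G_{\eps/2}^x}=0)$ and using $h(u)\le u e^{-\kappa u/\eps^2}\le$ (a constant multiple determined so the normalization gives exactly $|x|^{-p}$) reduces it to a bound on $\N_0(X_{G_\eps^x}(1))$, which is classical and yields precisely $|x|^{-p}$ after optimizing constants; alternatively one verifies directly that the fixed-$\eps=1$ functional is bounded by $1$ and invokes scaling.

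For the $\P_{X_0}$ statements \eqref{cea9.1.0} and \eqref{cea9.3.0} I would use the Poisson superposition \eqref{ev2.1}: under $\P_{X_0}$ the cluster decomposition gives, for the nonnegative functional $\Phi_\eps(\nu)=\eps^{-p}h(\nu_{G_\eps^x}(1))1(\nu_{G_{\eps/2}^x}=0)$ evaluated on each excursion $\nu^i$,
\[
\E_{X_0}\Big(\sum_{i\in I}\Phi_\eps(\nu^i)\,\prod_{j\neq i}1(\nu^j_{G_{\eps/2}^x}=0)\Big)
=\N_{X_0}(\Phi_\eps)\,\exp\big(-\N_{X_0}(1(\nu_{G_{\eps/2}^x}\neq 0))\big),
\]
by the Palm formula for Poisson point processes, where $\N_{X_0}(\cdot)=\int\N_x(\cdot)X_0(dx)$. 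The total indicator $1(X_{G_{\eps/2}^x}=0)$ forces all clusters to avoid $B_{\eps/2}(x)$, which is why the product over $j\neq i$ appears; the exponential factor is $\exp(-\int U^{\infty,\eps/2}(y-x)X_0(dy))$, which converges to $\exp(-\int V^\infty(y-x)X_0(dy))$ as $\eps\downarrow 0$ by \eqref{ev5.6} and \eqref{ev1.3} (note $U^{\infty,\eps/2}(y-x)\uparrow$ the relevant singular solution, and its limit is $V^\infty(y-x)$ by the standard identification). Meanwhile $\N_{X_0}(\Phi_\eps)=\int\N_x(\Phi_\eps)\,X_0(dx)$, wait—careful: $\N_{X_0}(\Phi_\eps)=\int\N_y(\Phi_\eps)\,X_0(dy)$ where under $\N_y$ the single ancestor sits at $y$, so $\N_y(\Phi_\eps)$ is the $\N_0$-functional translated, i.e. it equals the left side of \eqref{ce9.3.0} with $x$ replaced by $x-y$; by the first part this tends to $C_{\ref{t1}}(\kappa)|y-x|^{-p}$, and is bounded by $|y-x|^{-p}$ by \eqref{ce9.1.0}. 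Dominated convergence (using the uniform bound, integrable against $X_0$ since $x\in S(X_0)^c$ keeps $|y-x|$ bounded below) then gives \eqref{cea9.3.0} and, in the limit, \eqref{cea9.1.0}. The main obstacle is the first step: establishing the sharp leading-order asymptotics of the mixed-boundary elliptic solution that controls $\N_0(1-\exp(-\lambda X_{G_\eps^x}(1))1(X_{G_{\eps/2}^x}=0))$, including the correct constant $C_{\ref{t1}}(\kappa)$; this is where the delicate PDE analysis from \cite{Hong20} (solutions with singular initial data, behaviour near the inner sphere $\partial B_{\eps/2}(x)$) is indispensable, and matching the $\kappa$-dependence through the substitution $\lambda=\kappa\eps^{-2}$ requires care with the order of limits.
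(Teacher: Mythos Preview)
Your proposal has several genuine gaps, the most serious being a misreading of the geometry that leads you away from the paper's central simplification.

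First, the inclusion $G_\eps^x\supset G_{\eps/2}^x$ is backwards: since $G_r^x=\{y:|y-x|>r\}$, one has $G_\eps^x\subset G_{\eps/2}^x$, and in the special Markov property one must condition on the exit from the \emph{smaller} set $G_\eps^x$ first, not on $G_{\eps/2}^x$. The paper does exactly this (Proposition~\ref{pv0.2}(ii) with $G_1=G_\eps^x$, $G_2=G_{\eps/2}^x$), obtaining
\[
\N_0\big(1(X_{G_{\eps/2}^x}=0)\,\big|\,\cE_{G_\eps^x}\big)=\P_{X_{G_\eps^x}}(X_{G_{\eps/2}^x}=0)=\exp\big(-U^{\infty,\eps/2}(\eps)\,X_{G_\eps^x}(1)\big),
\]
and scaling gives $U^{\infty,\eps/2}(\eps)=4\eps^{-2}U^{\infty,1}(2)$. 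Thus the indicator is absorbed into the exponential and the whole problem reduces to studying
\[
\N_0\Big(\tfrac{X_{G_\eps^x}(1)}{\eps^p}\exp\big(-\lambda\,\tfrac{X_{G_\eps^x}(1)}{\eps^2}\big)\Big)\quad\text{with }\lambda=\kappa+4U^{\infty,1}(2).
\]
There is no mixed-boundary elliptic problem, and the delicate analysis you attribute to \cite{Hong20} is not needed here. The paper rewrites the display above via Le Gall's Palm formula for the exit measure \cite{Leg94} and a Girsanov change to a $(2+2\nu)$-dimensional Bessel process, then proves convergence by showing a certain function of $r=|x|/\eps$ is monotone with finite positive limit.

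Second, your upper bound argument for \eqref{ce9.1.0} fails: dropping the indicator and bounding $u e^{-\kappa u/\eps^2}\le c\,u$ yields $c\,\eps^{-p}\N_0(X_{G_\eps^x}(1))$, which is of order $\eps^{1-p}$ (e.g.\ in $d=3$) and diverges as $\eps\downarrow 0$. The paper instead notes that at the special value $\lambda=\lambda_d=2(4-d)$ one has the \emph{exact} identity
\[
\N_0\Big(\tfrac{X_{G_\eps^x}(1)}{\eps^p}\exp\big(-\lambda_d\,\tfrac{X_{G_\eps^x}(1)}{\eps^2}\big)\Big)=|x|^{-p}\qquad(0<\eps<|x|),
\]
because $U^{\lambda_d,1}=V^\infty$; then $4U^{\infty,1}(2)\ge 4V^\infty(2)=\lambda_d$ forces $\kappa+4U^{\infty,1}(2)\ge\lambda_d$, and monotonicity in $\lambda$ gives \eqref{ce9.1.0} with constant exactly $1$.

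Finally, your Poisson--Palm decomposition for $\P_{X_0}$ is not quite right: expanding $h\big(\sum_i\nu^i_{G_\eps^x}(1)\big)1(X_{G_{\eps/2}^x}=0)$ with $h(u)=u e^{-\kappa u/\eps^2}$, the product over $j\neq i$ must carry the factor $e^{-\kappa\nu^j_{G_\eps^x}(1)/\eps^2}1(\nu^j_{G_{\eps/2}^x}=0)$, not just the indicator; this changes the exponential prefactor you compute. The paper avoids this bookkeeping by differentiating the Laplace functional \eqref{ev5.4} directly in $\lambda$, which gives the product structure for free.
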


The proof of Theorem \ref{t1} will be given in Section \ref{s4}.

\section*{Acknowledgements}
This work was done as part of the author's graduate studies at the University of British Columbia. I would like to thank my supervisor, Professor Edwin Perkins, for suggesting this problem and for the helpful discussions and suggestions throughout this work.

\section{Exit Measures and the Special Markov Property}\label{s2}

We will use Le Gall's Brownian snake construction of a SBM $X$, with initial state \break $X_0\in M_F(\R^d)$. Set $\cW=\cup_{t\ge 0} C([0,t],\R^d)$ with the natural metric (see page 54 of \cite{Leg99}), and let $\zeta(w)=t$ be the lifetime of \mbox{$w\in C([0,t],\R^d)\subset\cW$.} The Brownian snake $W=(W_t,t\ge0)$ is a $\cW$-valued continuous strong Markov process and, abusing notation slightly, let $\N_x$ denote its excursion measure starting from the path at $x\in\R^d$ with lifetime zero.  As usual we let $\hat W(t)=W_t(\zeta(W_t))$ denote the tip of the snake at time $t$, and $\sigma(W)>0$ denote the length of the excursion path. We refer the reader to Ch. IV of \cite{Leg99} for the precise definitions.  The construction of super-Brownian motion, $X=X(W)$ under $\N_x$ or $\P_{X_0}$, may be found in Ch. IV of \cite{Leg99}. The ``law" of $X(W)$ under $\N_x$ is the canonical measure of SBM starting at $x$ described in the last Section (and also denoted by $\N_x$). If $\Xi=\sum_{j\in J}\delta_{W_j}$ is a Poisson point process on $\cW$ with intensity $\N_{X_0}(dW)=\int\N_x(dW)X_0(dx)$, then by Theorem~4 of Ch. IV of \cite{Leg99} (cf. \eqref{ev2.1}), we have
\begin{equation}\label{Xtdec}
X_t(W)=\sum_{j\in J}X_t(W_j)=\int X_t(W)\Xi(dW)\text{ for }t>0
\end{equation}
defines a SBM with initial measure $X_0$. We will refer to this as the standard set-up for $X$ under $\P_{X_0}$. It follows that the total local time $L^x$ under $\P_{X_0}$ may also be decomposed as 
\begin{equation}\label{ae4.1}
L^x=\sum_{j\in J}L^x(W_j)=\int L^x(W)\Xi(dW).
\end{equation}

Recall $\mathcal{R}=\overline{\{x:L^x>0\}}$ is the range of the SBM $X$ under $\P_{X_0}$ or $\N_{X_0}$.    Under $\N_{X_0}$ we have (see (8) on p. 69 of \cite{Leg99})
\begin{equation}\label{ea0.0}
\mR=\{\hat W(s):s\in[0,\sigma]\}.
\end{equation}
Let $G\in\cO_{X_0}$ as in \eqref{Gdef}.   
Then 
\begin{equation}\label{ea1.1}
X_G \text{ is a finite random measure supported on }\partial G \cap \mR\text{ a.s.}
\end{equation}
Under $\N_{X_0}$ this follows from the definition of $X_G$ on p. 77 of \cite{Leg99} and 
the ensuing discussion.
  Although \cite{Leg99} works under $\N_x$ for $x\in G$ the above extends immediately to $\P_{X_0}$ because as in (2.23) of \cite{MP17}, 
  \begin{equation}\label{ea1.2}
X_G=\sum_{j\in J} X_G(W_j)=\int X_G(W)d\Xi(W),
\end{equation}
where $\Xi$ is a Poisson point process on $\cW$ with intensity $\N_{X_0}$.  

Working under $\N_{X_0}$ and following \cite{Leg95}, we define
\begin{align}\label{ae8.1}
S_G(W_u)&=\inf\{t\le \zeta_u: W_u(t)\notin G\}\ \ (\inf\emptyset=\infty),\nn\\
\eta_s^G(W)&=\inf\{t:\int _0^t1(\zeta_u\le S_G(W_u))\,du>s\},\nn\\
\cE_G&=\sigma(W_{\eta_s^G},s\ge 0)\vee\{\N_{X_0}-\text{null sets}\},
\end{align}
where $s\to W_{\eta^G_s}$ is continuous (see p. 401 of \cite{Leg95}).  Write the open set \mbox{$\{u:S_G(W_u)<\zeta_u\}$} as countable union of disjoint open intervals, $\cup_{i\in I}(a_i,b_i)$.
Clearly $S_G(W_u)=S^i_G<\infty$ for all $u\in [a_i,b_i]$ and we may define
\[W^i_s(t)=W_{(a_i+s)\wedge b_i}(S^i_G+t)\text{ for }0\le t\le \zeta_{(a_i+s)\wedge b_i}-S^i_G.\]
Therefore for $i\in I$, $W^i\in C(\R_+,\cW)$ are the excursions of $W$ outside $G$. Proposition 2.3 of \cite{Leg95} implies $X_G$ is $\cE_G$-measurable and Corollary~2.8 of the same reference implies
\begin{equation}\label{SMP1}
\left\{\begin{array}{l}
\text{Conditional on $\cE_G$, the point measure $\sum_{i\in I}\delta_{W^i}$ is a Poisson}\\
\text{point measure with intensity $\N_{X_G}$.}\end{array}\right.
\end{equation}
If $D$ is an open set in $\cO_{X_0}$ 
such that $\bar{G}\subset D$ and $d(D^c,\bar G)>0$, then the definition (and existence) of $X_D(W)$ applies equally well to each $X_D(W^i)$
and it is easy to check that 
\begin{equation}\label{Widecomp}
X_D(W)=\sum_{i\in I}X_D(W^i).
\end{equation}

If $U$ is an open subset of $S(X_0)^c$, then $L_U$, the
restriction of the local time $L^x$ to $U$, is in $C(U)$ which is the set of continuous functions on $U$.
Here are some simple consequences of \eqref{SMP1}. 
\begin{proposition}\label{pv0.2}
Let $G_1, G_2$ be open sets 
 in $\cO_{X_0}$ such that $\overline{G_1}\subset G_2$ and $d(G_2^c,\overline{G_1})>0$.\\
(i) If $\psi_1:C(\overline{G_1}^c)\to [0,\infty)$ is Borel measurable, then
\begin{equation*} 
\N_{X_0}(\psi_1(L_{\overline{G_1}^c})|\cE_{G_1})=\E_{X_{G_1}}(\psi_1(L_{\overline{G_1}^c})).
\end{equation*}
(ii) If $\psi_2:M_F(\R^d)\to [0,\infty)$ is Borel measurable then
\begin{equation*} 
\N_{X_0}(\psi_2(X_{G_2})|\cE_{G_1})=\E_{X_{G_1}}(\psi_2(X_{G_2})).
\end{equation*}
\end{proposition}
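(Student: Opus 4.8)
\noindent\emph{Sketch of the intended argument.}
The plan is to derive both parts from the special Markov property \eqref{SMP1}. For each of the two quantities of interest, $L_{\overline{G_1}^c}$ and $X_{G_2}$, I would first exhibit it as a \emph{fixed} measurable functional of the point measure $\sum_{i\in I}\delta_{W^i}$ of excursions of the Brownian snake outside $G_1$. Then \eqref{SMP1} identifies the conditional law of that functional given $\cE_{G_1}$ with the law of the same functional evaluated at a Poisson point measure of intensity $\N_{X_{G_1}}$. Finally, the cluster decomposition of SBM under the canonical measure — \eqref{ae4.1} and \eqref{ea1.2}, now read with initial measure $X_{G_1}$ in place of $X_0$ — recognises the latter as the law of the functional under $\P_{X_{G_1}}$. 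Since $X_{G_1}$ is $\cE_{G_1}$-measurable (Proposition 2.3 of \cite{Leg95}), this yields the two displayed conditional expectations; passing from bounded $\psi_j$ to arbitrary non-negative measurable $\psi_j$ is a routine truncation/monotone-class step.

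Part (ii) is the quicker one. Because $\overline{G_1}\subset G_2$ and $d(G_2^c,\overline{G_1})>0$, the hypotheses of \eqref{Widecomp} hold with $G=G_1$, $D=G_2$, so $X_{G_2}(W)=\sum_{i\in I}X_{G_2}(W^i)$, which displays $X_{G_2}$ as a measurable functional of $\sum_{i\in I}\delta_{W^i}$. By \eqref{SMP1} its conditional law given $\cE_{G_1}$ is that of $\int X_{G_2}(W)\,\Xi'(dW)$ for a Poisson point measure $\Xi'$ of intensity $\N_{X_{G_1}}$, and by \eqref{ea1.2} with initial measure $X_{G_1}$ this is exactly the law of $X_{G_2}$ under $\P_{X_{G_1}}$. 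Hence $\N_{X_0}(\psi_2(X_{G_2})\mid\cE_{G_1})=\E_{X_{G_1}}(\psi_2(X_{G_2}))$.

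For part (i) the one extra ingredient is the decomposition
\[
L^x(W)=\sum_{i\in I}L^x(W^i)\qquad\text{for every }x\in\overline{G_1}^c .
\]
To obtain it, recall that the total occupation measure of $W$ is carried by the tips of the snake, $I(W)(\phi)=\int_0^{\sigma}\phi(\hat W_s)\,ds$. Splitting this integral according to whether $s$ lies in $\{u:\zeta_u\le S_{G_1}(W_u)\}$ or in its complement $\cup_{i\in I}(a_i,b_i)$, observing that on the first set the path $W_u$ never leaves $G_1$ and so $\hat W_u\in\overline{G_1}$, and that on each $(a_i,b_i)$ the tip process is a time shift of the tip process of $W^i$, one gets $I(W)\big|_{\overline{G_1}^c}=\sum_{i\in I}I(W^i)\big|_{\overline{G_1}^c}$ as measures on $\overline{G_1}^c$. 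Since $\overline{G_1}^c\subset G_1^c\subset S(X_0)^c$, the left-hand side has the continuous density $L_{\overline{G_1}^c}(W)$ there; comparing densities gives the displayed identity, and in particular shows that $L_{\overline{G_1}^c}(W)\in C(\overline{G_1}^c)$ is a measurable functional of $\sum_{i\in I}\delta_{W^i}$, since its value at each fixed $x$ is $\int L^x(w)\,(\sum_{i\in I}\delta_{W^i})(dw)$ and it is continuous. Now argue as in (ii): by \eqref{SMP1} the conditional law of $\sum_{i\in I}L^{\bullet}(W^i)$ given $\cE_{G_1}$ is the law of $\int L^{\bullet}(W)\,\Xi'(dW)$ with $\Xi'$ Poisson of intensity $\N_{X_{G_1}}$, and by \eqref{ae4.1} with initial measure $X_{G_1}$ — for which $\overline{G_1}^c\subset S(X_{G_1})^c$ because $S(X_{G_1})\subset\partial G_1\subset\overline{G_1}$, so the restricted local time is again continuous — this is the law of $L_{\overline{G_1}^c}$ under $\P_{X_{G_1}}$, giving $\N_{X_0}(\psi_1(L_{\overline{G_1}^c})\mid\cE_{G_1})=\E_{X_{G_1}}(\psi_1(L_{\overline{G_1}^c}))$.

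The only genuinely delicate point is the local-time decomposition in part (i): one must verify that the ``interior'' part of the snake contributes nothing to $L^x$ for $x\notin\overline{G_1}$ (handled above through the tips) and, more fussily, that the almost-everywhere equality of densities upgrades to an equality of continuous functions on $\overline{G_1}^c$, so that $L_{\overline{G_1}^c}$ really is a $C(\overline{G_1}^c)$-valued measurable functional of $\sum_{i\in I}\delta_{W^i}$. Everything else is bookkeeping with \eqref{SMP1} and the Poisson cluster representation of SBM under $\N$.
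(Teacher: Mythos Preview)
Your argument is correct and is essentially the standard derivation from Le Gall's special Markov property; the only point worth flagging is that the paper itself does not give an argument at all but simply invokes Proposition~2.2(a) of \cite{HMP18}, whose proof is precisely along the lines you sketch. In that sense you have supplied more than the paper does: the decomposition $X_{G_2}(W)=\sum_i X_{G_2}(W^i)$ is exactly \eqref{Widecomp}, the occupation-measure identity $I(\phi)=\int_0^\sigma\phi(\hat W_s)\,ds$ that you use to get $L_{\overline{G_1}^c}(W)=\sum_i L_{\overline{G_1}^c}(W^i)$ is the standard snake formula (it follows from the occupation-times formula for the lifetime process), and the passage from \eqref{SMP1} to the $\P_{X_{G_1}}$-law via \eqref{ae4.1}, \eqref{ea1.2} is exactly the Poisson cluster representation. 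Your caveat about upgrading the a.e.\ equality of densities to an equality in $C(\overline{G_1}^c)$ is well placed; the cleanest way to close it is to note that for any compact $K\subset\overline{G_1}^c$ only finitely many excursions $W^i$ have range meeting $K$ (by \eqref{SMP1} and $\N_y(\mR\cap K\neq\emptyset)<\infty$ uniformly over $y\in\partial G_1$), so the sum is locally finite and each summand is continuous by the global continuity of $L^x$ under $\N$ (Theorem~1.2 of \cite{Hong18}).
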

\begin{proof}
It follows immediately from Proposition 2.2(a) in \cite{HMP18}. 
\end{proof}

We will need a version of the above under $\P_{X_0}$ as well, which is Proposition 2.3 in \cite{HMP18}.
\begin{proposition}\label{pv0.1}
For 
 $X_0 \in M_F(\R^d)$ and an open set $G$ in $\cO_{X_0}$, let $\Psi$ be a  bounded measurable function on $C({\overline{G}}^c)$ and $\Phi_i$, $i=0,1$ be bounded measurable
functions on $M_F(\R^d)$ and $M_F(\R^d)^n$, respectively. Then\\
(a) $\E_{X_0} (\Phi_0(X_G) \Psi(L))=\E_{X_0}(\Phi_0(X_G)\E_{X_G}(\Psi(L)))$.\\
(b) Let $D_i$ be open sets 
 in $\cO_{X_0}$, such that $d(D_i^c,\bar G)>0$, $\forall 1\leq i\leq n$. Then \[\E_{X_0} \Big(\Phi_0(X_G)  \Phi_1(X_{D_1},\dots, X_{D_n})\Big)=\E_{X_0}\Big(\Phi_0(X_G) \E_{X_G}\Big( \Phi_1(X_{D_1},\dots,X_{D_n})\Big)\Big).\]
\end{proposition}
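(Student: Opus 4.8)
The plan is to deduce the $\P_{X_0}$ statement from the $\N_{X_0}$-level special Markov property \eqref{SMP1} together with the Poisson superposition \eqref{Xtdec}. Parts (a) and (b) have the same shape: $\Psi(L)=\Psi(L|_{\overline G^c})$ in (a) and $\Phi_1(X_{D_1},\dots,X_{D_n})$ in (b) are bounded measurable functionals of ``the super-Brownian motion outside $G$''. So it suffices to prove a single $\P_{X_0}$-version of \eqref{SMP1}: in the standard set-up there is a $\sigma$-field $\cE_G$ with respect to which $X_G$ is measurable and such that, conditionally on $\cE_G$, the point measure of \emph{all} Brownian-snake excursions outside $G$ is Poisson with the (random, $\cE_G$-measurable) intensity $\N_{X_G}$. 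Granting this, for any bounded functional $Y$ of that excursion configuration one gets $\N_{X_0}(Y\mid\cE_G)=\E_{X_G}(Y)$ --- because, by \eqref{Xtdec} applied with initial measure $X_G$, the law $\P_{X_G}$ is exactly such a Poisson superposition of $\N_{X_G}$-clusters --- and then multiplying by the $\cE_G$-measurable factor $\Phi_0(X_G)$ and taking $\P_{X_0}$-expectations yields both identities.

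To construct $\cE_G$ and prove the conditional-Poisson claim I would work in the standard set-up $\Xi=\sum_{j\in J}\delta_{W_j}$, a Poisson point process on $\cW$ with intensity $\N_{X_0}$, so that $X_G=\sum_j X_G(W_j)$ by \eqref{ea1.2}. For each atom $W_j$ let $\cE_G(W_j)$ denote the $\sigma$-field of \eqref{ae8.1} formed from $W_j$ alone and $\{W^i_j:i\in I_j\}$ its excursions outside $G$; applying \eqref{SMP1} to $W_j$ shows that, conditionally on $\cE_G(W_j)$, the configuration $\sum_{i}\delta_{W^i_j}$ is Poisson with intensity $\N_{X_G(W_j)}$, and $X_G(W_j)$ (hence $\N_{X_G(W_j)}$) is $\cE_G(W_j)$-measurable. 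I would then regard $\Xi$ as a marked Poisson process, each atom carrying as its mark the ``inside-$G$ part'' $(W_{j,\eta^G_s},s\ge0)$, and let $\cE_G$ be generated by all these marks (equivalently, by $X_G$ together with the inside-$G$ traces). Three facts then combine, via the superposition property of Poisson processes, to give the claim: (i) conditionally on $\cE_G$ the atoms are independent and each contributes the independent Poisson configuration $\sum_i\delta_{W^i_j}$ of intensity $\N_{X_G(W_j)}$; (ii) only the finitely many $j$ with $X_G(W_j)>0$ contribute anything; (iii) $\mu\mapsto\N_\mu=\int\N_x\,\mu(dx)$ is additive in $\mu$, so the full configuration $\cN:=\sum_j\sum_i\delta_{W^i_j}$ is, conditionally on $\cE_G$, Poisson with intensity $\sum_j\N_{X_G(W_j)}=\N_{\sum_j X_G(W_j)}=\N_{X_G}$.

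It remains to identify $\Psi(L)$ and $\Phi_1(X_{D_1},\dots,X_{D_n})$ as $\sigma(\cN)$-measurable. For (a): $\overline G^c\subset S(X_0)^c$, so $L$ is continuous there, and \eqref{ae4.1} together with the local-time decomposition off $G$ underlying Proposition~\ref{pv0.2}(i) (see \cite{HMP18}) gives $L^x=\sum_j\sum_i L^x(W^i_j)$ for every $x\in\overline G^c$; hence $\Psi(L|_{\overline G^c})$ is a measurable function of $\cN$. For (b): the hypothesis $d(D_i^c,\overline G)>0$ forces $\overline G\subset D_i$ (a point of $\overline G$ lying in $D_i^c$ would make the distance $0$), so \eqref{Widecomp} applies and $X_{D_i}=\sum_j\sum_i X_{D_i}(W^i_j)$, whence $\Phi_1(X_{D_1},\dots,X_{D_n})$ is a measurable function of $\cN$. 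Taking $Y=\Psi(L|_{\overline G^c})$ (resp.\ $Y=\Phi_1(X_{D_1},\dots,X_{D_n})$) in the conditional-Poisson claim, multiplying by $\Phi_0(X_G)$ and taking $\P_{X_0}$-expectations, gives (a) and (b). The step I expect to be the main obstacle is the conditional Poisson superposition in the second paragraph --- phrasing ``conditioning on the inside-$G$ information'' precisely enough (via the marked-Poisson viewpoint, so that the random index set $J$ causes no trouble) to justify the conditional independence of the families $\{\sum_i\delta_{W^i_j}\}_{j\in J}$, and then invoking the measurable additivity of $\mu\mapsto\N_\mu$ to collapse $\sum_j\N_{X_G(W_j)}$ into $\N_{X_G}$. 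The remaining ingredients --- continuity of $L$ off $S(X_0)$, the additive decompositions \eqref{ae4.1} and \eqref{Widecomp}, and $\cE_G$-measurability of $X_G$ --- are already recorded in the excerpt or in \cite{HMP18}.
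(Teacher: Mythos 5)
The paper does not prove this proposition at all: it is imported verbatim as Proposition~2.3 of \cite{HMP18}, so the only honest comparison is with the proof in that reference, and your argument is essentially a reconstruction of it --- Poissonize via the standard set-up \eqref{Xtdec}, apply the one-cluster special Markov property \eqref{SMP1} atom by atom, superpose the resulting conditionally independent Poisson configurations using the additivity of $\mu\mapsto\N_\mu$, and then recognize the conditional law of the outside-$G$ data as $\P_{X_G}$ via \eqref{ae4.1} and \eqref{ea1.2}. The structure is sound and the reductions are correct: $\overline G^c\subset S(X_0)^c$ so $L|_{\overline G^c}\in C(\overline G^c)$, and $d(D_i^c,\overline G)>0$ does force $\overline G\subset D_i$ so that \eqref{Widecomp} applies. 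Two small points deserve attention. First, the identity you display should read $\E_{X_0}(Y\mid\cE_G)=\E_{X_G}(Y)$ (expectation under $\P_{X_0}$ in the standard set-up), not $\N_{X_0}(Y\mid\cE_G)$; as written it conflates the excursion measure with the probability space carrying $\Xi$. Second, for part (b) you should also record that $D_i\in\cO_{X_G}$, i.e.\ $d(D_i^c,S(X_G))>0$, so that $X_{D_i}$ is actually defined under $\P_{X_G}$ on the right-hand side; this follows from $S(X_G)\subset\partial G$ together with $d(D_i^c,\overline G)>0$. The genuinely delicate step is the one you flag yourself --- formalizing the conditional independence of the families $\{\sum_i\delta_{W^i_j}\}_{j\in J}$ given the inside-$G$ marks via the marking/restriction theorems for Poisson processes --- and a complete write-up would need to spell this out; it is precisely this bookkeeping that \cite{HMP18} carries out, which is presumably why the present paper simply cites it rather than reproving it.
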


 \section{Construction of the local time by exit measure}\label{s3}
In this section we will give the proof of Theorem \ref{t0} and we assume throughout this section that $d=2$ or $d=3$. If $V^{\lambda}(x)=V^{\lambda}(|x|)$ is as in \eqref{ev1.1} and $\psi_0$ is as in \eqref{e3.1}, then
Lemma 8 of \cite{BO87} (see more precise results in Remark 1 of the same reference and \cite{Hong18}) shows that
\begin{align}\label{ev2.2}
    \lim_{x\to 0} \frac{V^\lambda(x)}{\psi_0(|x|)}=\lambda.
\end{align}

\begin{proof}[Proof of Theorem \ref{t0}]
We first consider the $\N_{X_0}$ case. Fix any $x\in S(X_0)^c$ and let $\delta=d(x,S(X_0))>0$. For any $\lambda>0$ and $0<\eps<\delta/2$, we have
\begin{align*}
&\N_{X_0}\Big(\Big(\exp(-\lambda X_{G_\eps^x}(1)\psi_0(\eps))-\exp(-\lambda L^x)\Big)^2\Big)\\
=&\N_{X_0}\Big(\exp(-2\lambda X_{G_\eps^x}(1)\psi_0(\eps))+\exp(-2\lambda L^x)-2\exp(-\lambda X_{G_\eps^x}(1)\psi_0(\eps))\exp(-\lambda L^x) \Big)\\
=&\N_{X_0}\Big(\exp(-2\lambda X_{G_\eps^x}(1)\psi_0(\eps))+\E_{X_{G_\eps^x}}\Big(\exp(-2\lambda L^x)\Big)\\
&\quad \quad \quad \quad \quad \quad -2\exp(-\lambda X_{G_\eps^x}(1)\psi_0(\eps))\E_{X_{G_\eps^x}}\Big(\exp(-\lambda L^x)\Big) \Big),
\end{align*}
where we have used Proposition \ref{pv0.2} (i) in the last line.  Apply \eqref{ev1.0} with $X_0=X_{G_\eps^x}$ to see that the above equals
\begin{align*}
&\N_{X_0}\Big(\exp(-2\lambda X_{G_\eps^x}(1)\psi_0(\eps))+\exp(- X_{G_\eps^x}(1) V^{2\lambda}(\eps) )\\
&\quad \quad \quad \quad \quad \quad-2\exp(-\lambda X_{G_\eps^x}(1)\psi_0(\eps))\exp(- X_{G_\eps^x}(1) V^{\lambda}(\eps) )\Big) \\
=&\N_{X_0}\Big(\exp(-2\lambda X_{G_\eps^x}(1)\psi_0(\eps))-\exp(-\lambda X_{G_\eps^x}(1)\psi_0(\eps))\exp(- X_{G_\eps^x}(1) V^{\lambda}(\eps) )\Big)\\
&\quad  \quad +\N_{X_0}\Big(\exp(- X_{G_\eps^x}(1) V^{2\lambda}(\eps) )-\exp(-\lambda X_{G_\eps^x}(1)\psi_0(\eps))\exp(- X_{G_\eps^x}(1) V^{\lambda}(\eps) )\Big) \\
:=&I_1+I_2.
\end{align*}
We first deal with $I_1$.
\begin{align}\label{e4.3.1}
|I_1|\leq &\N_{X_0}\Big(\Big|\exp\Big(-2\lambda X_{G_\eps^x}(1)\psi_0(\eps)\Big)-\exp\Big(-(\lambda+\frac{V^{\lambda}(\eps)}{\psi_0(\eps)}) X_{G_\eps^x}(1)\psi_0(\eps)\Big)\Big|\Big)\nn\\
=& \N_{X_0}\Big(\Big|X_{G_\eps^x}(1)\psi_0(\eps) \exp\Big(-\lambda'(\eps) X_{G_\eps^x}(1)\psi_0(\eps)\Big) \Big(2\lambda-(\lambda+\frac{V^{\lambda}(\eps)}{\psi_0(\eps)})\Big)\Big|\Big)   \nn\\
\leq & \big|\lambda-\frac{V^{\lambda}(\eps)}{\psi_0(\eps)}\big| \cdot \N_{X_0}\Big(X_{G_\eps^x}(1)\psi_0(\eps)\exp\Big(-\lambda X_{G_\eps^x}(1)\psi_0(\eps)\Big) \Big),
\end{align}
where the second line is by the mean value theorem with $\lambda'(\eps)(\omega)$ chosen between $2\lambda$ and $\lambda+V^{\lambda}(\eps)/\psi_0(\eps)$ and the last line follows by $\lambda'(\eps)>\lambda$ for $\eps>0$ small (see \eqref{ev2.2}). 

Recall $\delta=d(x,S(X_0))$ and define $S(X_0)^{>\delta/4} \equiv \{x: d(x,S(X_0))>\delta/4\}$ so that $\overline{B(x,\eps)}\subset S(X_0)^{>\delta/4}$ for $0<\eps<\delta/2$. By the definition of $\mR$ as in \eqref{er1.0} and \eqref{ea1.1}, we can conclude that
\begin{align}\label{er1.1}
\mR\cap S(X_0)^{>\delta/4} = \emptyset \text{ implies } L^x=0  \text{ and } X_{G_\eps^x}(1)=0, \forall 0<\eps<\delta/2. 
\end{align}
Therefore we have \eqref{e4.3.1} becomes
\begin{align*}
|I_1|\leq & \big|\lambda-\frac{V^{\lambda}(\eps)}{\psi_0(\eps)}\big| \cdot \N_{X_0}\Big(X_{G_\eps^x}(1)\psi_0(\eps)\exp\Big(-\lambda X_{G_\eps^x}(1)\psi_0(\eps)\Big) 1(\mR\cap  S(X_0)^{>\delta/4} \neq \emptyset)\Big) \\
\leq &\big|\lambda-\frac{V^{\lambda}(\eps)}{\psi_0(\eps)}\big| \cdot \lambda^{-1} e^{-1} \N_{X_0}(\mR\cap S(X_0)^{>\delta/4} \neq \emptyset) \to 0 \text{ as } \eps \downarrow 0,
\end{align*}
where the second inequality is by $xe^{-\lambda x} \leq \lambda^{-1} e^{-1}, \forall x\geq 0$ and the convergence to $0$ follows from \eqref{ev2.2} and Proposition VI.2 of  \cite{Leg99}. 

Similarly we get $I_2 \to 0$ as $\eps \downarrow 0$. Therefore for any sequence $\eps_{n} \downarrow 0$, we can take a subsequence $\eps_{n_k} \downarrow 0$ so that $\N_{X_0}$-a.e. $\exp(-\lambda X_{G_{\eps_{n_k}}^x}(1)\psi_0( {\eps_{n_k}}))\to \exp(-\lambda L^x)$ as $\eps_{n_k} \downarrow 0$, which implies $\N_{X_0}$-a.e. we have $X_{G_{\eps_{n_k}}^x}(1)\psi_0( {\eps_{n_k}}) \to L^x$ as $\eps_{n_k} \downarrow 0$. By working with the finite measure $\N_{X_0}(\cdot \cap \{\mR \cap S(X_0)^{>\delta/4}\neq \emptyset\})$ and recalling \eqref{er1.1}, we see that the proof of convergence in measure $\N_{X_0}$ is complete. For the $\P_{X_0}$ case, the above calculation works in a similar way and so we omit the details.\\

 In $d=3$, by (6.10) in \cite{HMP18}, for any $x\in S(X_0)^c$ and $0<\eps<d(x,S(X_0))$ we have
 \begin{equation}\label{ev1.2.1}
\E_{X_0}(X_{G_\eps^x}(1))=\N_{X_0}(X_{G_\eps^x}(1))=\int \frac{\eps}{|x-x_0|}dX_0(x_0).
\end{equation}
Therefore for any $0<\eps_2<\eps_1<r_0$ with $r_0=d(x,S(X_0))/2>0$, by Proposition \ref{pv0.2}(ii) we have
 \begin{equation}
 \N_{X_0}\Big(\frac{X_{G_{\eps_2}^x}(1)}{\eps_2}\Big|\cE_{G_{\eps_1}^x}\Big)= \P_{X_{G_{\eps_1}^x}}\Big(\frac{X_{G_{\eps_2}^x}(1)}{\eps_2} \Big)=\frac{X_{G_{\eps_1}^x}(1)}{\eps_1},
\end{equation}
the last by \eqref{ev1.2.1}.
Recall that $\psi_0(\eps)=1/(2\pi \eps)$ and so
 \begin{equation}\label{ev1.2.2}
 \N_{X_0}\Big(X_{G_{\eps_2}^x}(1)\psi_0(\eps_2)\Big|\cE_{G_{\eps_1}^x}\Big)=X_{G_{\eps_1}^x}(1)\psi_0(\eps_1),
\end{equation}
which implies $\{X_{G_{r_0-r}^x}(1)\psi_0(r_0-r), 0\leq r<r_0\}$ is a nonnegative martingale. Recall that we always work with the cadlag version of $X_{G_{r_0-r}^x}(1)$ on $0\leq r<r_0$. Then we may apply martingale convergence theorem to get $\N_{X_0}$-a.e. $\lim_{r\to r_0} X_{G_{r_0-r}^x}(1)\psi_0(r_0-r)$ exists. Since we already have $X_{G_{\eps}^x}(1)\psi_0(\eps)$ converges to $L^x$ in measure $\N_{X_0}$, we conclude that $\N_{X_0}$-a.e. $X_{G_{\eps}^x}(1)\psi_0(\eps) \to L^x$ as $\eps \downarrow 0$. The case for $\P_{X_0}$ follows in a similar way.
\end{proof}

\section{Convergence of the mean measure}\label{s4}
 In this section the proof of Theorem \ref{t1} will be given and we assume $d\leq 3$. First we give some preliminary results on Bessel process.
 \subsection{Preliminaries on Bessel process}
 For $\gamma\in\R$, let $(\rho_t)$ denote a $\gamma$-dimensional Bessel process starting from $r>0$ under $P_{r}^{(\gamma)}$ and let $(\cF_t^\rho)$ denote the right-continuous filtration generated by the Bessel process. Define $\tau_R=\inf\{t\geq 0: \rho_t\leq R\}$ for $R>0$.
The following results (i) and (ii) are from Lemma 5.2 and Lemma 5.3 of \cite{MP17} and the last one follows from (ii) and a simple application of Cauchy-Schwartz inequality.
 \begin{lemma} \label{l13.5} 
Assume $0<2\gamma \le \nu^2$ and $q>2$. Then\\
(i) \[E_r^{(2+2\nu)}\Bigl(\exp\Bigl(\int_0^{\tau_1}\frac{\gamma}{\rho_s^2}\,ds\Bigr)\Bigr|\tau_1<\infty\Bigr)=r^{\nu-\sqrt{\nu^2-2\gamma}}, \forall r\geq 1.\]
(ii)
\[\sup_{r\ge 1}E_r^{(2+2\nu)}\Bigl(\exp\Bigl(\int_0^{\tau_1}\frac{\gamma}{\rho_s^q}\,ds\Bigr)\Bigr|\tau_1<\infty\Bigr)\le C_{\ref{l13.5}}(q,\nu)<\infty.\]
(iii)
\[\inf_{r\ge 1}E_r^{(2+2\nu)}\Bigl(\exp\Bigl(-\int_0^{\tau_1}\frac{\gamma}{\rho_s^q}\,ds\Bigr)\Bigr|\tau_1<\infty\Bigr)\ge
c_{\ref{l13.5}}(q,\nu)>0.\]
\end{lemma}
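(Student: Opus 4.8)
Parts (i) and (ii) are exactly Lemmas 5.2 and 5.3 of \cite{MP17}, so the plan is to quote them and to deduce (iii) from (ii) by an elementary inequality. For orientation let me recall the mechanism behind (i). By the Feynman--Kac formula, with $V(r)=\gamma r^{-2}$, the function $r\mapsto E_r^{(2+2\nu)}\bigl(\exp(\int_0^{\tau_1}V(\rho_s)\,ds)\,1(\tau_1<\infty)\bigr)$ is the minimal nonnegative solution on $(1,\infty)$ of $\tfrac12 u''+\tfrac{1+2\nu}{2r}u'+\tfrac{\gamma}{r^2}u=0$ with $u(1)=1$, i.e. (after multiplying by $2r^2$) of the Euler equation $r^2u''+(1+2\nu)ru'+2\gamma u=0$; its indicial roots are $-\nu\pm\sqrt{\nu^2-2\gamma}$, which are real precisely because $2\gamma\le\nu^2$, and the minimal nonnegative solution is $u(r)=r^{-\nu-\sqrt{\nu^2-2\gamma}}$. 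Combining this with the classical escape probability $P_r^{(2+2\nu)}(\tau_1<\infty)=r^{-2\nu}$ (for $\nu>0$, so that the Bessel dimension exceeds $2$, which is the regime in force) yields the stated conditional expectation in (i). For (ii), the role of $q>2$ is to make $\int_0^{\tau_1}\gamma\rho_s^{-q}\,ds$ a finite functional whose exponential moment stays bounded uniformly in the starting point $r\ge1$; one natural route is to Doob-transform by the harmonic function $r^{-2\nu}$, which realises the process conditioned on $\{\tau_1<\infty\}$ as a Bessel process of dimension $2-2\nu$, and then control the functional there. Both arguments are carried out in \cite{MP17}, so I would cite them directly.

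It remains to prove (iii), and here the plan is a single application of the Cauchy--Schwarz inequality. Fix $r\ge1$, write $Q_r=P_r^{(2+2\nu)}(\,\cdot\mid\tau_1<\infty)$ and $Z=\exp\bigl(\int_0^{\tau_1}\gamma\rho_s^{-q}\,ds\bigr)$, so that $Z\ge1$ and, since $Z^{-1}\le 1$, all the expectations below are finite. Writing $1=Z^{1/2}\cdot Z^{-1/2}$ and applying Cauchy--Schwarz under $Q_r$,
\[
1=E^{Q_r}\bigl(Z^{1/2}Z^{-1/2}\bigr)\le \bigl(E^{Q_r}(Z)\bigr)^{1/2}\bigl(E^{Q_r}(Z^{-1})\bigr)^{1/2},
\]
whence $E^{Q_r}(Z^{-1})\ge 1/E^{Q_r}(Z)$. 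By (ii), $E^{Q_r}(Z)\le C_{\ref{l13.5}}(q,\nu)$ uniformly in $r\ge1$, so $E^{Q_r}(Z^{-1})\ge 1/C_{\ref{l13.5}}(q,\nu)$ for every $r\ge1$, and taking the infimum over $r\ge1$ gives (iii) with $c_{\ref{l13.5}}(q,\nu):=1/C_{\ref{l13.5}}(q,\nu)>0$.

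There is essentially no obstacle in (iii): the only points worth noting are that the bound in (ii) is uniform in $r\ge1$ (so the conclusion inherits uniformity) and that dividing by $E^{Q_r}(Z)$ is legitimate because $1\le E^{Q_r}(Z)<\infty$. If one wanted to be self-contained and reprove (i)--(ii) rather than invoke \cite{MP17}, the genuine difficulty would sit there: justifying via an optional-stopping/$h$-transform argument that the Feynman--Kac quantity is the specific power solution above (this is where $2\gamma\le\nu^2$ enters, guaranteeing real indices and finiteness of the exponential moment), and bounding the functional in (ii) uniformly over $r\ge1$ (this is where $q>2$ is essential). Since both are already established in \cite{MP17}, I would not redo them.
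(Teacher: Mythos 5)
Your proposal is correct and matches the paper's route exactly: the paper also quotes (i) and (ii) from Lemmas 5.2 and 5.3 of \cite{MP17} and obtains (iii) from (ii) by the same Cauchy--Schwarz argument $1=E(Z^{1/2}Z^{-1/2})\le (E(Z))^{1/2}(E(Z^{-1}))^{1/2}$. Nothing further is needed.
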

We also state a result on the application of Girsanov's theorem on Bessel process from \cite{Yor92} (see also Proposition 2.5 of \cite{MP17}).
\begin{lemma}\label{l12.4}
Let $\lambda\geq 0$, $\mu\in \R, r>0$ and $\nu=\sqrt{\lambda^2+\mu^2}$. If $\Phi_t\geq 0$ is $\cF_t^\rho$-adapted, then for all $R<r$, we have
\[E_r^{(2+2\mu)}\Big(\Phi_{t\wedge \tau_R} \exp\Big(-\frac{\lambda^2}{2} \int_0^{t\wedge \tau_R} \frac{1}{\rho_s^2} ds\Big)\Big)=r^{\nu-\mu} E_{r}^{(2+2\nu)} \Big((\rho_{t\wedge \tau_R})^{-\nu+\mu} \Phi_{t\wedge \tau_R}\Big).\]
\end{lemma}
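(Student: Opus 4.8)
\textbf{Plan for the proof of Lemma \ref{l12.4}.}

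The claim is a Girsanov/Cameron--Martin identity relating the law of a $(2+2\mu)$-dimensional Bessel process to that of a $(2+2\nu)$-dimensional one, weighted by an exponential functional of $\int \rho_s^{-2}\,ds$. The plan is to work on the canonical path space with the filtration $(\cF_t^\rho)$ and exhibit an explicit exponential local martingale whose associated change of measure turns a BES$(2+2\mu)$ into a BES$(2+2\nu)$ up to the stopping time $\tau_R$; integrating $\Phi_{t\wedge\tau_R}$ against this change of measure will produce exactly the two sides of the stated formula. Since we only run the process until $\tau_R$ with $R>0$ fixed and $R<r$, the process stays bounded away from $0$ on $[0,t\wedge\tau_R]$, so all the local martingales below are genuine martingales (their quadratic variations involve $\int_0^{t\wedge\tau_R}\rho_s^{-2}\,ds\le t/R^2<\infty$) and all manipulations are legitimate — this is the device that keeps everything rigorous.

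First I would recall the SDE for a $\delta$-dimensional Bessel process started at $r$ before it hits $0$: $d\rho_t=dB_t+\frac{\delta-1}{2\rho_t}\,dt$ for a standard Brownian motion $B$. Writing $\delta=2+2\mu$, I would consider the exponential martingale $Z_t=\exp\!\big((\nu-\mu)\int_0^{t\wedge\tau_R}\rho_s^{-1}\,dB_s-\tfrac{(\nu-\mu)^2}{2}\int_0^{t\wedge\tau_R}\rho_s^{-2}\,ds\big)$, which is a true martingale on $[0,\infty)$ by the boundedness of the integrands up to $\tau_R$. Under the measure $d\widetilde P=Z_{t}\,dP_r^{(2+2\mu)}$, Girsanov's theorem gives that $\widetilde B_t=B_t-(\nu-\mu)\int_0^{t\wedge\tau_R}\rho_s^{-1}\,ds$ is a Brownian motion, hence $d\rho_t=d\widetilde B_t+\big(\frac{2\mu+1}{2\rho_t}+\frac{\nu-\mu}{\rho_t}\big)dt=d\widetilde B_t+\frac{2\nu+1}{2\rho_t}\,dt$ up to $\tau_R$, i.e. $\rho$ is a BES$(2+2\nu)$. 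So for $\cF_{t\wedge\tau_R}^\rho$-adapted $\Phi_{t\wedge\tau_R}\ge0$,
\begin{equation*}
E_r^{(2+2\mu)}\big(\Phi_{t\wedge\tau_R}\,Z_t\big)=E_r^{(2+2\nu)}\big(\Phi_{t\wedge\tau_R}\big).
\end{equation*}
The remaining task is purely algebraic: rewrite $Z_t$ so that the stochastic integral $\int_0^{t\wedge\tau_R}\rho_s^{-1}\,dB_s$ is replaced by deterministic and path-boundary terms. Apply Itô's formula to $\log\rho_t$ (valid since $\rho$ stays in $[R,\infty)$ up to $\tau_R$): $d\log\rho_t=\rho_t^{-1}\,dB_t+\frac{2\mu}{2}\rho_t^{-2}\,dt$, so $\int_0^{t\wedge\tau_R}\rho_s^{-1}\,dB_s=\log(\rho_{t\wedge\tau_R}/r)-\mu\int_0^{t\wedge\tau_R}\rho_s^{-2}\,ds$. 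Substituting, $Z_t=(\rho_{t\wedge\tau_R}/r)^{\nu-\mu}\exp\!\big(-\big[(\nu-\mu)\mu+\tfrac{(\nu-\mu)^2}{2}\big]\int_0^{t\wedge\tau_R}\rho_s^{-2}\,ds\big)$, and since $(\nu-\mu)\mu+\tfrac12(\nu-\mu)^2=\tfrac12(\nu^2-\mu^2)=\tfrac{\lambda^2}{2}$ (using $\nu^2=\lambda^2+\mu^2$), we get $Z_t=(\rho_{t\wedge\tau_R}/r)^{\nu-\mu}\exp\!\big(-\tfrac{\lambda^2}{2}\int_0^{t\wedge\tau_R}\rho_s^{-2}\,ds\big)$. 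Plugging this into the displayed identity, multiplying through by $r^{\nu-\mu}$ and rearranging yields
\begin{equation*}
E_r^{(2+2\mu)}\Big(\Phi_{t\wedge\tau_R}\exp\Big(-\tfrac{\lambda^2}{2}\int_0^{t\wedge\tau_R}\rho_s^{-2}\,ds\Big)\Big)=r^{\nu-\mu}E_r^{(2+2\nu)}\big((\rho_{t\wedge\tau_R})^{-\nu+\mu}\Phi_{t\wedge\tau_R}\big),
\end{equation*}
which is exactly the assertion.

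The main obstacle — though a mild one here — is the justification that $Z_t$ is a genuine martingale and that Girsanov applies cleanly; this is where the hypothesis $R<r$ and the use of the stopping time $\tau_R$ (rather than $\tau_0$) is essential, since on $\{t\le\tau_R\}$ one has $\rho_s\ge R$, making $\int_0^{t\wedge\tau_R}\rho_s^{-2}\,ds\le t R^{-2}$ bounded and Novikov's condition trivially satisfied. A secondary point is to handle the case $\nu=0$ (equivalently $\lambda=\mu=0$), where the statement is the tautology $E_r^{(2+2\mu)}(\Phi_{t\wedge\tau_R})=r^0 E_r^{(2+2\mu)}(\rho_{t\wedge\tau_R}^0\Phi_{t\wedge\tau_R})$; and to note that for $\nu>0$ the formula extends from bounded $\Phi$ to all nonnegative $\cF^\rho$-adapted $\Phi$ by monotone convergence. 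Alternatively, one may simply cite \cite{Yor92} or Proposition 2.5 of \cite{MP17} for the underlying absolute-continuity relation between Bessel laws of different dimensions and then perform only the algebraic reduction of $Z_t$ above; the computation $(\nu-\mu)\mu+\tfrac12(\nu-\mu)^2=\tfrac{\lambda^2}{2}$ is the one identity that must be checked in either approach.
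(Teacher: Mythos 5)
Your argument is correct, but note that the paper does not prove this lemma at all: it is stated as a quoted result from \cite{Yor92} (see also Proposition 2.5 of \cite{MP17}), so you are supplying the standard proof of the cited fact rather than reproducing an argument from the text. Your Girsanov route is the right one and all the computations check out: the drift $\frac{2\mu+1}{2\rho}+\frac{\nu-\mu}{\rho}=\frac{2\nu+1}{2\rho}$ correctly identifies the tilted process as BES$(2+2\nu)$ up to $\tau_R$; the It\^o reduction of $\int_0^{t\wedge\tau_R}\rho_s^{-1}\,dB_s$ to $\log(\rho_{t\wedge\tau_R}/r)-\mu\int_0^{t\wedge\tau_R}\rho_s^{-2}\,ds$ is right; and the key identity $(\nu-\mu)\mu+\tfrac12(\nu-\mu)^2=\tfrac12(\nu^2-\mu^2)=\tfrac{\lambda^2}{2}$ holds by the definition $\nu=\sqrt{\lambda^2+\mu^2}$. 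Your emphasis on the role of $\tau_R$ with $R<r$ (keeping $\rho_s\ge R$, hence Novikov trivially satisfied and the SDE representation valid even for $2+2\mu<2$, as when $d=1$, $\mu=-1/2$) is exactly the point that makes the identity clean, and the final substitution $\Phi\mapsto \rho_{t\wedge\tau_R}^{-(\nu-\mu)}\Phi$ with monotone convergence to pass from bounded to general nonnegative $\Phi$ is legitimate. The only cosmetic remark is that ``$\cF_t^\rho$-adapted'' should be read as progressively measurable so that $\Phi_{t\wedge\tau_R}$ is $\cF_{t\wedge\tau_R}^\rho$-measurable, a technicality the paper also glosses over. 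In short: the paper buys the result by citation; your proof makes it self-contained at the cost of half a page, and either is acceptable.
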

Now define
\begin{align}\label{ev1.5}
\mu=
\begin{cases}
-1/2\ &\text{if}\ d=1,\\
0\ &\text{if}\ d=2,\\
1/2\ &\text{if}\ d=3,
\end{cases}
\text{ and } \nu=\sqrt{\mu^2+4(4-d)}=
\begin{cases}
7/2 &\text{if}\ d=1,\\
2\sqrt{2}\ &\text{if}\ d=2,\\
\frac{\sqrt{17}}{2}\ &\text{if}\ d=3,
\end{cases}
\end{align}
so that 
$d=2+2\mu \text{ and } p=\mu+\nu$ (recall \eqref{ep1.1}).  Let $B$ denote a $d$-dimensional Brownian motion starting from $x$ under $P_x$ for $d\leq 3$  and by slightly abusing the notation, we define $\tau_\eps=\tau_\eps^B=\inf\{t\geq 0: |B_t|\leq \eps\}$ for any $\eps>0$.

 \begin{proposition}\label{p20.1}
Let $x\in \R^d-\{0\}$ and $0<\eps<|x|$. For any Borel measurable function $g: \R^+ \to \R$ bounded on $[r_0, r_0^{-1}]$ for any $r_0>0$, we have
\begin{align*}
E_{x}\Big(&1(\tau_\eps<\infty) \exp\big(-\int_0^{\tau_{\eps}} g(|B_s|) ds\big)\Big)\nn \\
&=\eps^p |x|^{-p}{E}_{|x|}^{(2+2\nu)}\Big( \exp\big(-\int_0^{\tau_{\eps}} (g(\rho_s)-V^\infty(\rho_s)) ds\big)\Big|\tau_\eps<\infty\Big),
\end{align*}
where $\nu$ is as in \eqref{ev1.5}.
\end{proposition}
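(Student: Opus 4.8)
The plan is to reduce the Brownian functional on the left-hand side to a Bessel-process computation via the Feynman–Kac / $h$-transform machinery encoded in Lemma~\ref{l12.4}, with $V^\infty$ playing the role of the harmonic weight that linearizes the problem. First I would recall that for a $d$-dimensional Brownian motion $B$ starting at $x\neq 0$, the radial part $|B_t|$ is a $d$-dimensional Bessel process, i.e. a $(2+2\mu)$-dimensional Bessel process with $\mu$ as in \eqref{ev1.5}, run up to the hitting time $\tau_\eps$ of level $\eps$; since $g$ depends on $B$ only through $|B|$, the expectation $E_x(1(\tau_\eps<\infty)\exp(-\int_0^{\tau_\eps}g(|B_s|)\,ds))$ equals $E_{|x|}^{(2+2\mu)}(1(\tau_\eps<\infty)\exp(-\int_0^{\tau_\eps}g(\rho_s)\,ds))$. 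This is the routine first reduction.

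Next I would insert $V^\infty(\rho_s)=\lambda_d/\rho_s^2$ with $\lambda_d=2(4-d)$, writing $\exp(-\int_0^{\tau_\eps}g(\rho_s)\,ds)=\exp(-\int_0^{\tau_\eps}(g(\rho_s)-V^\infty(\rho_s))\,ds)\cdot\exp(-\int_0^{\tau_\eps}\tfrac{\lambda_d}{\rho_s^2}\,ds)$, and then apply Lemma~\ref{l12.4} with $\Phi_t=\exp(-\int_0^{t\wedge\tau_\eps}(g(\rho_s)-V^\infty(\rho_s))\,ds)$, $R=\eps$, $\mu$ as above, and $\lambda^2/2=\lambda_d$, i.e. $\lambda^2=4(4-d)$, so that $\nu=\sqrt{\lambda^2+\mu^2}=\sqrt{\mu^2+4(4-d)}$, matching \eqref{ev1.5}. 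The lemma (after letting $t\uparrow\infty$, which I justify by monotone/dominated convergence using $\tau_\eps<\infty$ $P^{(2+2\nu)}_{|x|}$-a.s. — note $2+2\nu>2$ but the Girsanov identity already accounts for the change of hitting behaviour through the $r^{\nu-\mu}$ prefactor and the $\rho_{t\wedge\tau_\eps}^{-\nu+\mu}$ weight) converts the $(2+2\mu)$-expectation into $|x|^{\nu-\mu}E_{|x|}^{(2+2\nu)}(\rho_{\tau_\eps}^{-\nu+\mu}\Phi_{\tau_\eps})$. On the event $\tau_\eps<\infty$ we have $\rho_{\tau_\eps}=\eps$, so $\rho_{\tau_\eps}^{-\nu+\mu}=\eps^{\mu-\nu}=\eps^{-p}$ and $|x|^{\nu-\mu}=|x|^p$ come out as deterministic constants; I would then need the fact that $\tau_\eps<\infty$ $P_{|x|}^{(2+2\nu)}$-a.s. (true since $2+2\nu>2$ does not prevent hitting a lower level in finite time — a $\delta$-dimensional Bessel process for any $\delta$ hits every level below its start a.s., only the $\delta\geq2$ case has the process failing to return after passing, which is irrelevant here) so that conditioning on $\tau_\eps<\infty$ is vacuous, or alternatively keep the conditioning as written. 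Rearranging the constants gives exactly
\[
E_x\Big(1(\tau_\eps<\infty)\exp\big(-\textstyle\int_0^{\tau_\eps}g(|B_s|)\,ds\big)\Big)=\eps^p|x|^{-p}\,E_{|x|}^{(2+2\nu)}\Big(\exp\big(-\textstyle\int_0^{\tau_\eps}(g(\rho_s)-V^\infty(\rho_s))\,ds\big)\,\Big|\,\tau_\eps<\infty\Big),
\]
as claimed, once one checks $\mu+\nu=p$ and $d=2+2\mu$, both recorded right after \eqref{ev1.5}.

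The main obstacle is the careful passage to the limit $t\uparrow\infty$ in Lemma~\ref{l12.4} and the bookkeeping of the hitting-time events: on the left we need $1(\tau_\eps<\infty)$ and the $(2+2\mu)$-Bessel process may be transient (for $d=3$, $\mu=1/2$, dimension $3$), so $\tau_\eps<\infty$ is a genuine restriction, whereas on the right under $P^{(2+2\nu)}$ the larger dimension could make one worry about integrability of $\rho_{t\wedge\tau_\eps}^{-\nu+\mu}$ as $t\to\infty$ on $\{\tau_\eps=\infty\}$. The resolution is that the Girsanov/$h$-transform in Lemma~\ref{l12.4} is precisely the Doob transform by the harmonic function $r\mapsto r^{\mu-\nu}$ for the killed radial generator, which sends the sub-probability $P^{(2+2\mu)}(\,\cdot\,,\tau_\eps<\infty)$ to the probability $P^{(2+2\nu)}(\,\cdot\,|\,\tau_\eps<\infty)$ with $\tau_\eps<\infty$ a.s.; so I would phrase the limiting step by first stopping at $\tau_\eps\wedge t$, applying the lemma for each finite $t$, then using that $\Phi_{t\wedge\tau_\eps}$ is uniformly bounded by $1$ when $g\geq V^\infty$ (and more generally controlled by Lemma~\ref{l13.5} when $g-V^\infty$ has the right sign/decay, which is the only case needed in the applications) to pass to the limit by dominated convergence, the indicator $1(\tau_\eps<\infty)$ emerging automatically on the right after the transform. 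I would remark that the boundedness hypothesis on $g$ over compacts $[r_0,r_0^{-1}]$ guarantees the integral $\int_0^{\tau_\eps}g(\rho_s)\,ds$ is well-defined and finite (the path stays in $[\eps,\infty)$, and near $\infty$ the occupation time is handled by transience/the explicit Bessel estimates), so no further integrability issue arises.
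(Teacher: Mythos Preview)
Your overall strategy---reduce to the radial Bessel process, insert $V^\infty$, apply the Girsanov identity of Lemma~\ref{l12.4}, and let $t\to\infty$---is exactly the paper's approach. However, the execution contains a genuine error that makes the constants come out wrong.

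You claim that ``a $\delta$-dimensional Bessel process for any $\delta$ hits every level below its start a.s.'' and hence that conditioning on $\{\tau_\eps<\infty\}$ under $P^{(2+2\nu)}_{|x|}$ is vacuous. This is false: for $\delta>2$ the Bessel process is transient and $P_r^{(\delta)}(\tau_R<\infty)=(R/r)^{\delta-2}$ for $R<r$. Here $2+2\nu\ge 2+\sqrt{17}>2$, so $P^{(2+2\nu)}_{|x|}(\tau_\eps<\infty)=(\eps/|x|)^{2\nu}$ is strictly less than $1$. This is not a technicality you can wave away---it is precisely the missing factor in your constant computation. After Lemma~\ref{l12.4} and $t\to\infty$ you obtain
\[
|x|^{\nu-\mu}\,\eps^{-(\nu-\mu)}\,E^{(2+2\nu)}_{|x|}\Bigl(1(\tau_\eps<\infty)\exp\bigl(-\textstyle\int_0^{\tau_\eps}(g-V^\infty)(\rho_s)\,ds\bigr)\Bigr),
\]
and converting the unconditional expectation to a conditional one multiplies by $P^{(2+2\nu)}_{|x|}(\tau_\eps<\infty)=(\eps/|x|)^{2\nu}$, giving the prefactor $(\eps/|x|)^{2\nu-(\nu-\mu)}=(\eps/|x|)^{\nu+\mu}=(\eps/|x|)^{p}$. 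Your assertion that $\eps^{\mu-\nu}=\eps^{-p}$ and $|x|^{\nu-\mu}=|x|^{p}$ amounts to $p=\nu-\mu$, which contradicts $p=\mu+\nu$ (recorded just after \eqref{ev1.5}); the two agree only in $d=2$ where $\mu=0$.

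A minor point: the passage $t\to\infty$ is simpler than you make it. Include the indicator $1(\tau_\eps\le t)$ in the nonnegative adapted functional $\Phi_t$ of Lemma~\ref{l12.4}; then on both sides the integrand is monotone increasing in $t$ through the indicator alone (the exponential factor does not depend on $t$ once $\tau_\eps\le t$), and monotone convergence applies directly without any sign assumption on $g-V^\infty$ or appeal to Lemma~\ref{l13.5}.
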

\begin{proof}
Recall $\mu$ as in \eqref{ev1.5}. Use $d=2+2\mu$  and monotone convergence to see that
\begin{align*}
I:=& E_{x}\Big(1(\tau_\eps<\infty) \exp\big(-\int_0^{\tau_{\eps}} g(|B_s|) ds\big)\Big)=E_{|x|}^{(2+2\mu)}\Big(1(\tau_\eps<\infty) \exp\big(-\int_0^{\tau_{\eps}} g(\rho_s) ds\big)\Big)\nn \\
=&\lim_{t\to \infty} E_{|x|}^{(2+2\mu)}\Big(1(\tau_\eps\leq t)  \exp\big(-\int_0^{\tau_{\eps}} g(\rho_s) ds\big)\Big)\\
=&\lim_{t\to \infty} E_{|x|}^{(2+2\mu)}\Big(1(\tau_\eps\leq \tau_\eps \wedge t)  \exp\big(-\int_0^{\tau_{\eps}\wedge t} g(\rho_s) ds\big)\Big).
\end{align*}
Recall from \eqref{ev1.3} that $V^\infty(x)=\lambda_d |x|^{-2}$ where $\lambda_d=2(4-d)$. Apply Lemma \ref{l12.4} with $\lambda=\sqrt{2\lambda_d}$ and $\mu, \nu$ as in \eqref{ev1.5} to get 
\begin{align*}
I=&\lim_{t\to \infty} E_{|x|}^{(2+2\mu)}\Big(1(\tau_\eps\leq \tau_\eps \wedge t) \exp\big(-\int_0^{\tau_{\eps}\wedge t} (g(\rho_s)-V^\infty(\rho_s)) ds\big) \exp\big(- \int_0^{\tau_{\eps}\wedge t} \frac{\lambda_d}{\rho_s^2} ds\big)\Big)\\
=&\lim_{t\to \infty}|x|^{\nu-\mu} E_{|x|}^{(2+2\nu)}\Big(1(\tau_\eps\leq \tau_\eps \wedge t)  \exp\big(-\int_0^{\tau_{\eps}\wedge t} (g(\rho_s)-V^\infty(\rho_s)) ds\big) \rho_{\tau_{\eps}\wedge t}^{-\nu+\mu}\Big)\\
=&\lim_{t\to \infty}|x|^{\nu-\mu} \eps^{-\nu+\mu} E_{|x|}^{(2+2\nu)}\Big(1(\tau_\eps\leq t)  \exp\big(-\int_0^{\tau_{\eps}} (g(\rho_s)-V^\infty(\rho_s)) ds\big) \Big)\\
=&|x|^{\nu-\mu} \eps^{-\nu+\mu} E_{|x|}^{(2+2\nu)}\Big(1(\tau_\eps<\infty)  \exp\big(-\int_0^{\tau_{\eps}} (g(\rho_s)-V^\infty(\rho_s)) ds\big) \Big)\\
=&|x|^{-\nu-\mu} \eps^{\nu+\mu} E_{|x|}^{(2+2\nu)}\Big( \exp\big(-\int_0^{\tau_{\eps}} (g(\rho_s)-V^\infty(\rho_s)) ds\big)\Big|\tau_\eps<\infty \Big),
\end{align*}
where the second last line uses monotone convergence theorem and the last follows by $P_{|x|}^{(2+2\nu)}(\tau_\eps<\infty)=\eps^{2\nu}|x|^{-2\nu}$.
The proof is complete since we have $p=\mu+\nu$.
 \end{proof}

\subsection{Proof of Theorem \ref{t1}}

By using Proposition \ref{pv0.2}(ii), we have for any $|x|>\eps>0$,
\begin{align}\label{ae2.3}
&\N_0\Big(\frac{X_{G_\eps^x}(1)}{\eps^p} \exp(-\kappa \frac{X_{G_\eps^x}(1)}{\eps^{2}})1(X_{G_{\eps/2}^x}=0) \Big)\nn\\
=&\N_0\Big(\frac{X_{G_\eps^x}(1)}{\eps^p} \exp(-\kappa \frac{X_{G_\eps^x}(1)}{\eps^{2}})\P_{X_{G_\eps^x}}\big(X_{G_{\eps/2}^x}=0\big) \Big)\nn\\
=&\N_0\Big(\frac{X_{G_\eps^x}(1)}{\eps^p} \exp(-\kappa \frac{X_{G_\eps^x}(1)}{\eps^{2}})\exp\big(-U^{\infty, \eps/2}(\eps) X_{G_\eps^x}(1)\big)\Big)\nn\\
=&\N_0\Big(\frac{X_{G_\eps^x}(1)}{\eps^p} \exp\Big(-(\kappa+4U^{\infty, 1}(2)) \frac{X_{G_\eps^x}(1)}{\eps^{2}}\Big)\Big),
\end{align}
where the second equality follows from \eqref{ev5.5} and the last is by the scaling as in \eqref{ev5.3}. Similarly for any $x\in S(X_0)^c$ and $0<\eps<d(x,S(X_0))$, we may use Proposition \ref{pv0.1}(b) to see that
\begin{align}\label{ae2.4}
&\E_{X_0}\Big(\frac{X_{G_\eps^x}(1)}{\eps^p} \exp(-\kappa \frac{X_{G_\eps^x}(1)}{\eps^{2}})1(X_{G_{\eps/2}^x}=0) \Big)\nn\\
=&\E_{X_0}\Big(\frac{X_{G_\eps^x}(1)}{\eps^p} \exp\Big(-(\kappa+4U^{\infty, 1}(2)) \frac{X_{G_\eps^x}(1)}{\eps^{2}}\Big)\Big).
\end{align}
Therefore Theorem \ref{t1} will an easy consequence of the following results.
\begin{proposition}\label{p0.1}
Let $d\leq 3$ and $X_0\in M_F$. For any $\lambda>0$, there is some constant $K_{\ref{p0.1}}(\lambda)>0$ such that
 \begin{align}\label{e9.1.0}
&\lim_{\eps \downarrow 0} \N_0\Big(\frac{X_{G_\eps^x}(1)}{\eps^p} \exp(-\lambda \frac{X_{G_\eps^x}(1)}{\eps^{2}}) \Big)=K_{\ref{p0.1}}(\lambda) |x|^{-p}, \ \forall x\neq 0,
\end{align}
and for any $x\in S(X_0)^c$, we have
 \begin{align}\label{ea9.1.0}
\lim_{\eps \downarrow 0} \E_{X_0}&\Big(\frac{X_{G_\eps^x}(1)}{\eps^p} \exp(-\lambda \frac{X_{G_\eps^x}(1)}{\eps^{2}}) \Big)\nn\\
&= e^{-\int V^\infty(y-x)X_0(dy)}K_{\ref{p0.1}}(\lambda) \int |y-x|^{-p} X_0(dy).
\end{align}
Moreover if $\lambda=\lambda_d$, for any $x\neq 0$ we have 
 \begin{align}\label{e9.4.3}
  \N_0\Big(\frac{X_{G_\eps^x}(1)}{\eps^p} \exp(-\lambda_d \frac{X_{G_\eps^x}(1)}{\eps^{2}}) \Big)=|x|^{-p}, \ \forall 0<\eps<|x|,
\end{align}
and for any $x\in S(X_0)^c$, we have
 \begin{align}\label{ea9.4.3}
\E_{X_0}&\Big(\frac{X_{G_\eps^x}(1)}{\eps^p} \exp(-\lambda_d \frac{X_{G_\eps^x}(1)}{\eps^{2}}) \Big)\nn\\
&= e^{-\int V^\infty(y-x)X_0(dy)} \int |y-x|^{-p} X_0(dy), \ \forall 0<\eps<d(x, S(X_0)).
\end{align}
\end{proposition}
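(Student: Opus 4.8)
The plan is to identify all four assertions with the large-$r$ behaviour of the single scalar function $g_\lambda(r):=\partial_\mu U^{\mu,1}(r)\big|_{\mu=\lambda}$. Starting from \eqref{ev5.4} with $X_0=\delta_0$ and the ball centred at $x$, one has $\N_0\big(1-e^{-\theta X_{G_\eps^x}(1)}\big)=U^{\theta,\eps}(|x|)$ for $\eps<|x|$; differentiating in $\theta$ — legitimate since $ze^{-\theta z}\le C_{\theta_0}\big(1-e^{-(\theta_0/4)z}\big)$ uniformly for $\theta\ge\theta_0$ and $\N_0\big(1-e^{-(\theta_0/4)X_{G_\eps^x}(1)}\big)=U^{\theta_0/4,\eps}(|x|)<\infty$ — gives $\N_0\big(X_{G_\eps^x}(1)e^{-\theta X_{G_\eps^x}(1)}\big)=\partial_\theta U^{\theta,\eps}(|x|)$, and the scaling \eqref{ev5.3} (with $\theta=\lambda/\eps^2$) turns this into $\eps^{-p}g_\lambda(|x|/\eps)$. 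Hence \eqref{e9.1.0} is equivalent to $g_\lambda(r)\sim K_{\ref{p0.1}}(\lambda)r^{-p}$ as $r\to\infty$, and \eqref{e9.4.3} to the identity $g_{\lambda_d}(r)=r^{-p}$ for all $r>1$. I would dispose of $\lambda=\lambda_d$ first: $V^\infty$ restricted to $\{|y|\ge 1\}$ solves $\Delta V^\infty=(V^\infty)^2$ with boundary value $V^\infty(1)=\lambda_d$ and decays at infinity, so uniqueness in \eqref{ev5.2} forces $U^{\lambda_d,1}=V^\infty$ there; differentiating \eqref{ev5.2} in $\mu$, $g_{\lambda_d}$ solves $\tfrac12\Delta g=V^\infty g$ on $\{|y|>1\}$, $g(1)=1$, $g\to 0$, and its unique radial decaying solution is $r^{-\mu-\nu}=r^{-p}$ by the choice of $\nu$ in \eqref{ev1.5} (recall $p=\mu+\nu$ and $\mu+\nu=\sqrt{\mu^2+2\lambda_d}+\mu$ is exactly the negative root). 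This gives \eqref{e9.4.3} with no limit, and $K_{\ref{p0.1}}(\lambda_d)=1$.

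For general $\lambda>0$, differentiating \eqref{ev5.2} in $\mu$ (the $\mu$-derivative exists and, by elliptic regularity, solves the linearized equation, as already follows from differentiating the Laplace functional \eqref{ev5.4}) shows $g_\lambda$ solves $\tfrac12\Delta g_\lambda=U^{\lambda,1}g_\lambda$ on $\{|y|>1\}$, $g_\lambda=1$ on $\{|y|=1\}$, $g_\lambda\to 0$; since $U^{\lambda,1}\le\lambda$ on $[1,\infty)$ (the constant $\lambda$ is a supersolution) and $g_\lambda\le 1$ is bounded, Feynman--Kac gives $g_\lambda(r)=E_{\bar x}\big(1(\tau_1<\infty)\exp(-\int_0^{\tau_1}U^{\lambda,1}(|B_s|)\,ds)\big)$ for any $\bar x$ with $|\bar x|=r$ (after translating the ball to the origin). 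Feeding the bounded function $U^{\lambda,1}$ and $\eps=1$ into Proposition \ref{p20.1} yields
\[g_\lambda(r)=r^{-p}\,H_\lambda(r),\qquad H_\lambda(r):=E_r^{(2+2\nu)}\Big(\exp\big(-\!\int_0^{\tau_1}(U^{\lambda,1}(\rho_s)-V^\infty(\rho_s))\,ds\big)\,\Big|\,\tau_1<\infty\Big),\]
so $\N_0\big(\tfrac{X_{G_\eps^x}(1)}{\eps^p}e^{-\lambda X_{G_\eps^x}(1)/\eps^2}\big)=|x|^{-p}H_\lambda(|x|/\eps)$ and \eqref{e9.1.0} reduces to $H_\lambda(r)\to K_{\ref{p0.1}}(\lambda)\in(0,\infty)$ as $r\to\infty$. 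Two inputs are needed. \emph{(a)} A decay estimate $|U^{\lambda,1}(\rho)-V^\infty(\rho)|\le C_\lambda\rho^{-q}$ on $\{\rho\ge 1\}$ for some $q=q(\lambda)>2$: for $\lambda\ge\lambda_d$ — the only case needed for Theorem \ref{t1}, where $\lambda=\kappa+4U^{\infty,1}(2)>\lambda_d$ since $U^{\infty,1}(2)\ge V^\infty(2)=\lambda_d/4$ — the difference $w:=U^{\lambda,1}-V^\infty$ is $\ge 0$ and satisfies $\Delta w=(U^{\lambda,1}+V^\infty)w\ge 2V^\infty w$, so comparison with the explicit solution $(\lambda-\lambda_d)\rho^{-p}$ of $\Delta-2V^\infty$ gives $0\le w(\rho)\le(\lambda-\lambda_d)\rho^{-p}$ with $p>2$; for $\lambda<\lambda_d$ one first gets $V^\infty-U^{\lambda,1}\le(\lambda_d-\lambda)\rho^{-2}$ from $\Delta(V^\infty-U^{\lambda,1})\ge V^\infty(V^\infty-U^{\lambda,1})$, then uses $U^{\lambda,1}\ge\lambda\rho^{-2}$ to bootstrap once and obtain decay of some order $q>2$. \emph{(b)} A renewal argument: under $P_r^{(2+2\nu)}(\cdot\,|\,\tau_1<\infty)$ the process is a Bessel$(2-2\nu)$ process, and the strong Markov property at its first hitting time $\tau_R$ of a level $R\in(1,r)$ (which precedes $\tau_1$) gives the multiplicative identity $H_\lambda(r)=c(r,R)H_\lambda(R)$ with $c(r,R)=E_r^{(2+2\nu)}\big(\exp(-\int_0^{\tau_R}(U^{\lambda,1}-V^\infty)(\rho_s)\,ds)\,\big|\,\tau_1<\infty\big)$, together with the cocycle relation $c(r',R)=c(r',r)c(r,R)$ for $r'>r>R$. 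Using \emph{(a)}, a Brownian-to-Bessel rescaling and Lemma \ref{l13.5}(ii)--(iii), $\int_0^{\tau_R}\rho_s^{-q}ds$ has exponential moments bounded uniformly in $r$; this keeps $H_\lambda$ between two positive constants, gives $|1-c(r',r)|\le Cr^{2-q}$, and, since $q>2$, makes the cocycle converge along dyadic scales, so $c(r,R)\to c(\infty,R)\in(0,\infty)$ as $r\to\infty$ and $K_{\ref{p0.1}}(\lambda):=c(\infty,R)H_\lambda(R)$ (independent of $R$) is well defined. Since $|x|/\eps\to\infty$, \eqref{e9.1.0} follows.

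Finally, the $\E_{X_0}$ statements follow from the $\N_{X_0}$ ones via the Poisson cluster representation \eqref{ev2.1}: for $Z=X_{G_\eps^x}(1)$ and $\theta=\lambda/\eps^2$, differentiating the Laplace functional gives $\E_{X_0}(Ze^{-\theta Z})=e^{-\N_{X_0}(1-e^{-\theta Z})}\,\N_{X_0}(Ze^{-\theta Z})$, where $\N_{X_0}(1-e^{-\theta Z})=\int U^{\theta,\eps}(|y-x|)X_0(dy)\to\int V^\infty(y-x)X_0(dy)$ by dominated convergence (using $U^{\theta,\eps}(|y-x|)=\eps^{-2}U^{\lambda,1}(|y-x|/\eps)\to V^\infty(y-x)$ from \emph{(a)}, and $U^{\theta,\eps}\le U^{\infty,\eps}$ for the dominating function), while $\eps^{-p}\N_{X_0}(Ze^{-\theta Z})=\int|y-x|^{-p}H_\lambda(|y-x|/\eps)X_0(dy)\to K_{\ref{p0.1}}(\lambda)\int|y-x|^{-p}X_0(dy)$ because $|y-x|\ge d(x,S(X_0))>0$ on $S(X_0)$ and $H_\lambda$ is bounded; at $\lambda=\lambda_d$ one has $U^{\lambda_d/\eps^2,\eps}=V^\infty$ on $\{|\cdot|\ge\eps\}$ and $H_{\lambda_d}\equiv 1$, so both limits are exact identities, giving \eqref{ea9.4.3} (and \eqref{ea9.1.0}). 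The hard part is the convergence $H_\lambda(r)\to K_{\ref{p0.1}}(\lambda)$: it genuinely needs both the sub/supersolution estimate \emph{(a)} — with the bootstrap in the regime $\lambda<\lambda_d$ — and the renewal/cocycle analysis of the conditioned Bessel process in \emph{(b)}; once these are in hand, everything else is bookkeeping through Proposition \ref{p20.1}, Lemma \ref{l13.5} and the scaling \eqref{ev5.3}.
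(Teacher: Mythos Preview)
Your proposal is correct and lands on the same core identity the paper proves, namely
\[
\N_0\Big(\tfrac{X_{G_\eps^x}(1)}{\eps^p}\exp(-\lambda\,X_{G_\eps^x}(1)/\eps^2)\Big)=|x|^{-p}\,H_\lambda(|x|/\eps),
\]
with $H_\lambda(r)=E_r^{(2+2\nu)}\big(\exp(-\int_0^{\tau_1}(U^{\lambda,1}-V^\infty)(\rho_s)\,ds)\,\big|\,\tau_1<\infty\big)$ (the paper's $f^\lambda$), followed by showing $H_\lambda(r)\to K_{\ref{p0.1}}(\lambda)\in(0,\infty)$. The differences are in how you reach and exploit this identity.

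\textbf{Deriving the representation.} The paper obtains the Feynman--Kac integral directly from Le~Gall's Palm formula for $X_{G_\eps}$ (Proposition~4.1 of \cite{Leg94}), which produces the Brownian expectation with potential $U^{\lambda\eps^{-2},\eps}$ without first writing down a PDE for $g_\lambda$. Your route---differentiate \eqref{ev5.2} in the boundary value to get $\tfrac12\Delta g_\lambda=U^{\lambda,1}g_\lambda$, then apply Feynman--Kac and Proposition~\ref{p20.1}---is equally valid and arrives at the same formula; it trades a probabilistic input (Palm) for an analytic one (linearised PDE~$+$~FK).

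\textbf{Convergence of $H_\lambda$.} Here you diverge more. The paper observes that $U^{\lambda,1}-V^\infty$ keeps a fixed sign (nonnegative for $\lambda\ge\lambda_d$, nonpositive for $\lambda\le\lambda_d$), so by the strong Markov factorisation $H_\lambda(r)=c(r,R)H_\lambda(R)$ the map $r\mapsto H_\lambda(r)$ is \emph{monotone}; the decay estimate on $U^{\lambda,1}-V^\infty$ is then only needed to bound the limit away from $0$ or $\infty$. Your cocycle argument (estimating $|1-c(r',r)|\le C r^{2-q}$ and summing along dyadic scales) is more quantitative and also works, but the monotonicity shortcut is simpler and gives for free the supremum bound~\eqref{ce1.1} used in the dominated convergence step for~\eqref{ea9.1.0}.

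\textbf{The $\lambda<\lambda_d$ decay.} The paper imports the bound $V^\infty-U^{\lambda,1}\le C\rho^{-p}$ for large $\rho$ from \cite{HMP18} (their (4.13)--(4.14)). Your two-step bootstrap---first $V^\infty-U^{\lambda,1}\le(\lambda_d-\lambda)\rho^{-2}$, hence $U^{\lambda,1}\ge\lambda\rho^{-2}$, then feed this back into $\Delta w=(V^\infty+U^{\lambda,1})w$ and compare with $r^{-\mu-\sqrt{\mu^2+\lambda_d+\lambda}}$---is a nice self-contained alternative that indeed yields an exponent $q>2$ in every dimension $d\le 3$; this is a genuine improvement in presentation over citing an external lemma.
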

\begin{proof}[Proof of Theorem \ref{t1}]
The proof of \eqref{ce9.3.0} and \eqref{cea9.1.0} is immediate by applying \eqref{ae2.3}, \eqref{ae2.4} and \eqref{e9.1.0}, \eqref{ea9.1.0} with $\lambda=\kappa+4U^{\infty, 1}(2)$ and by letting $C_{\ref{t1}}(\kappa)=K_{\ref{p0.1}}(\kappa+4U^{\infty, 1}(2))$.

 Turning to the upper bounds, by (4.1) of \cite{HMP18} we have $4U^{\infty,1}(2)\geq 4V^\infty(2)=\lambda_d$ and so $\kappa+4U^{\infty, 1}(2)\geq \lambda_d$. Therefore the upper bounds in \eqref{ce9.1.0} and \eqref{cea9.3.0} will follow immediately from \eqref{ae2.3}, \eqref{ae2.4} and \eqref{e9.4.3}, \eqref{ea9.4.3}.
\end{proof}
It remains to prove Proposition \ref{p0.1}. Recall from \eqref{ev5.4} that 
 \begin{align}\label{eb1.0}
 U^{\lambda \eps^{-2},\eps}(x)=\N_0\Big(1-\exp(-\lambda \frac{X_{G_\eps^x}(1)}{\eps^{2}}) \Big), \forall |x|>\eps.
   \end{align}
Monotone convergence and the convexity of $e^{-ax}$ for $a,x>0$ allow us to differentiate the above with respect to $\lambda>0$ through the expectation so that for any $\lambda>0$ we can define
 \begin{align}\label{ae12.2}
U_1^{\lambda \eps^{-2}, \eps}(x):=\frac{d}{d\lambda}U^{\lambda \eps^{-2}, \eps}(x)=\N_{0}\Big(\frac{X_{G_\eps^x}(1)}{\eps^{2}}\exp(-\lambda \frac{X_{G_\eps^x}(1)}{\eps^{2}}) \Big), \forall |x|>\eps.
  \end{align}
 By the Palm measure formula for $X_{G_\eps}$ (see Proposition 4.1 of \cite{Leg94}) applied with\break  $F(y,X_{G_\eps})= \exp(-\lambda \eps^{-2}X_{G_\eps}(1))$, we  can see that for any $x\neq 0$ and $\eps\in (0,|x|)$,
\begin{align}\label{7.17}
\N_x\Big(X_{G_\eps}(1) &\exp(-\lambda \frac{X_{G_\eps}(1)}{\eps^{2}}) \Big)\nn\\
&=\int P_x (dw)1_{(\tau_{\varepsilon}(w)<\infty)} E^{(w)}\Big(\exp(-\lambda \eps^{-2}\int \cN_w (d\kappa )X_{G_\eps}(\kappa)(1) )\Big),
  \end{align}
  where $P_x$ denotes the law of $d$-dimensional Brownian motion starting from $x$ and \break $\tau_\eps(w)=\inf\{t\geq 0: |w(t)|\leq \eps\}$ and for each $w\in \cW_x$ (stopped paths starting from $x$), under $E^{(w)}$, $\cN_w$ denotes a Poisson measure on $C(\R_+, \cW)$ with intensity $\int_0^{\zeta_w} \N_{w(t)}( \cdot) dt$.   Note here we have taken our branching rate for $X$ to be one and so our constants will differ from \cite{Leg94}. The remark below Proposition 4.1 of \cite{Leg94} also implies $\zeta_w=\tau_\eps$. Hence the right-hand side of \eqref{7.17} equals
  \begin{align}\label{e1.00}
  &\int P_x (dw)1_{(\tau_{\varepsilon}(w)<\infty)} \exp\Big(-\int_0^{\tau_\eps} \N_{w(t)}(1-\exp(-\lambda \eps^{-2} X_{G_\eps}(1) ) dt\Big)\nonumber\\
  =&\int P_x (dw)1_{(\tau_{\varepsilon}(w)<\infty)} \exp\Big(-\int_0^{\tau_\eps} U^{\lambda \eps^{-2}, \eps}(w(t))dt \Big)\nn\\
  =&\eps^p |x|^{-p}E_{|x|}^{(2+2\nu)}\Big(  \exp\Big(-\int_0^{ \tau_{{\varepsilon}}} (U^{\lambda \eps^{-2}, \eps}-V^{\infty})(\rho_s)  ds\Big)\Big|\tau_{\varepsilon}<\infty \Big),
   \end{align}
where in the first equality we have used \eqref{eb1.0} and the last follows from Proposition \ref{p20.1} with $g=U^{\lambda \eps^{-2}, \eps}$. Combining \eqref{ae12.2}, \eqref{7.17} and \eqref{e1.00}, we conclude
   \begin{align}\label{ae1.0}
\frac{1}{\eps^{p-2}}U_1^{\lambda \eps^{-2}, \eps}(x)=&\N_0\Big(\frac{X_{G_\eps^x}(1)}{\eps^p} \exp(-\lambda \frac{X_{G_\eps^x}(1)}{\eps^{2}}) \Big)=\N_x\Big(\frac{X_{G_\eps}(1)}{\eps^p} \exp(-\lambda \frac{X_{G_\eps}(1)}{\eps^{2}}) \Big)\nn\\
=& |x|^{-p}E_{|x|}^{(2+2\nu)}\Big(  \exp\Big(-\int_0^{ \tau_{{\varepsilon}}} (U^{\lambda \eps^{-2}, \eps}-V^{\infty})(\rho_s)  ds\Big)\Big|\tau_{\varepsilon}<\infty \Big),
  \end{align}
  where the second equality is by translation invariance and spherical symmetry.
Recall $\lambda_d=2(4-d)$ as in \eqref{ev1.3}. As one can easily check, $V^\infty(x)=\lambda_d |x|^{-2}$ is also a solution to the PDE \[\Delta U^{\lambda_d,1}=(U^{\lambda_d,1})^2 \text{ on } G_1, \quad U^{\lambda_d,1}= \lambda_d \text{ on } \partial G_1,\] and thanks to the uniqueness we have 
   \begin{align}\label{e1.2}
  U^{\lambda_d,1}(x)=V^\infty(x), \forall |x|\geq 1.
  \end{align}
  By the scaling of $U^{\lambda\eps^{-2}, \eps}$ from \eqref{ev5.3}, one can see that \eqref{e1.2} implies
\begin{align}\label{ev5.7}
U^{\lambda_d\eps^{-2}, \eps}(x)=\eps^{-2} U^{\lambda_d,1}(x/\eps) = \eps^{-2}V^{\infty}(x/\eps)=V^{\infty}(x), \forall |x|\geq \eps.
\end{align}
Therefore by letting $\lambda=\lambda_d$ in \eqref{ae1.0}, we can get 
   \begin{align}\label{e1.1}
\frac{1}{\eps^{p-2}} U_1^{\lambda_d\eps^{-2}, \eps}(x)=\N_0\Big(\frac{X_{G_\eps^x}(1)}{\eps^p} \exp(-\lambda_d \frac{X_{G_\eps^x}(1)}{\eps^{2}}) \Big)=|x|^{-p}, \forall |x|>\eps.
  \end{align}
If $\lambda>\lambda_d$, by \eqref{e1.2} and the definition of $U^{\lambda,1}$ as in \eqref{eb1.0}, we have 
\begin{align}\label{ae1.8}
U^{\lambda,1}(x)\geq U^{\lambda_d,1}(x)= V^\infty(x) \text{ for all } |x|\geq 1.
\end{align}
Use scaling as in \eqref{ev5.7} to conclude 
\begin{align}\label{ev5.8}
U^{\lambda \eps^{-2}, \eps}(x)\geq V^{\infty}(x), \forall |x|\geq \eps \text{ if } \lambda>\lambda_d.
\end{align}
Similarly if $0<\lambda<\lambda_d$, we have 
\begin{align}\label{ae1.9}
U^{\lambda,1}(x)\leq  V^\infty(x), \forall |x|\geq 1,
\end{align}
and 
\begin{align}\label{ae1.90}
U^{\lambda\eps^{-2}, \eps}(x)\leq V^{\infty}(x), \forall |x|\geq \eps.
\end{align}


\begin{lemma} \label{ac13.5} 
For any $\lambda>0$, we have for all $x\neq 0$,
\begin{align}\label{ae13.5}
\lim_{\eps\downarrow 0} E_{|x|}^{(2+2\nu)}\Big(\exp\big(-\int_0^{ \tau_{\eps}}  (U^{\lambda \eps^{-2}, \eps}-V^\infty)(\rho_s) ds\big)\Big|\tau_{\eps}<\infty\Big)=K_{\ref{p0.1}}(\lambda)\in (0,\infty).
\end{align}
Moreover, if $0<\lambda<\lambda_d$, we have
\begin{align}\label{ce1.1}
\sup_{0<\eps<|x|} E_{|x|}^{(2+2\nu)}\Big(\exp\big(-\int_0^{ \tau_{\eps}}  (U^{\lambda \eps^{-2}, \eps}-V^\infty)(\rho_s) ds\big)\Big|\tau_{\eps}<\infty\Big)=K_{\ref{p0.1}}(\lambda)<\infty.
\end{align}
\end{lemma}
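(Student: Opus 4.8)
The plan is to analyze the integrand $U^{\lambda\eps^{-2},\eps}-V^\infty$ along the Bessel path and show that the conditional expectation converges. First I would rescale: using the scaling identity \eqref{ev5.3}, write $U^{\lambda\eps^{-2},\eps}(\rho_s)=\eps^{-2}U^{\lambda,1}(\rho_s/\eps)$ and, after a time-change $s\mapsto \eps^2 s$ (which turns $\tau_\eps$ for the Bessel process started at $|x|$ into $\tau_1$ for a Bessel process started at $|x|/\eps$), reduce the quantity to
\begin{align*}
E_{|x|/\eps}^{(2+2\nu)}\Big(\exp\big(-\int_0^{\tau_1}(U^{\lambda,1}-V^\infty)(\rho_s)\,ds\big)\Big|\tau_1<\infty\Big).
\end{align*}
So everything hinges on the behaviour of this expectation as the Bessel starting point $r=|x|/\eps\to\infty$, with a \emph{fixed} integrand $h(\rho):=(U^{\lambda,1}-V^\infty)(\rho)=U^{\lambda,1}(\rho)-\lambda_d\rho^{-2}$ on $\rho\ge 1$.

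The key facts about $h$ I would assemble are: (a) $h$ is continuous and bounded on $[1,\infty)$, since both $U^{\lambda,1}$ and $V^\infty$ are continuous there, $U^{\lambda,1}(1)=\lambda$ is finite, and $U^{\lambda,1}(\rho)\to 0$, $V^\infty(\rho)\to 0$ as $\rho\to\infty$; (b) $h$ decays fast at infinity — indeed, both $U^{\lambda,1}(\rho)$ and $V^\infty(\rho)$ are $O(\rho^{-2})$ (for $V^\infty$ exactly, and for $U^{\lambda,1}$ by standard bounds on solutions of $\Delta U=U^2$ with bounded boundary data, e.g. comparison with $V^\infty$ plus a harmonic correction, or simply $U^{\lambda,1}\le\max(\lambda,\lambda_d)\cdot\rho^{-2}$-type estimates), so $|h(\rho)|\le C\rho^{-2}$ on $\rho\ge 1$; and (c) the sign of $h$ is constant: by \eqref{ae1.8}/\eqref{ev5.8}, $h\ge 0$ when $\lambda>\lambda_d$, $h\equiv 0$ when $\lambda=\lambda_d$, and by \eqref{ae1.9}, $h\le 0$ when $\lambda<\lambda_d$. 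Fact (b) is exactly what lets me invoke Lemma \ref{l13.5}(ii) (with $q=2$, any admissible $\gamma$): for the starting point $r=|x|/\eps\ge 1$,
\begin{align*}
\sup_{r\ge 1}E_r^{(2+2\nu)}\Big(\exp\big(\textstyle\int_0^{\tau_1}\frac{C}{\rho_s^2}\,ds\big)\Big|\tau_1<\infty\Big)<\infty,
\end{align*}
which uniformly dominates $\exp(-\int_0^{\tau_1}h(\rho_s)\,ds)\le\exp(\int_0^{\tau_1}\frac{C}{\rho_s^2}\,ds)$; Lemma \ref{l13.5}(iii) gives the matching positive lower bound, so the limit $K_{\ref{p0.1}}(\lambda)$, once shown to exist, lies in $(0,\infty)$. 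The uniform bound in the case $0<\lambda<\lambda_d$ is even easier: there $h\le 0$ so $\exp(-\int_0^{\tau_1}h(\rho_s)\,ds)\le\exp(\int_0^{\tau_1}\frac{C}{\rho_s^2}\,ds)$ again, and Lemma \ref{l13.5}(ii) gives the sup over all $r>1$ (equivalently all $0<\eps<|x|$) directly, yielding \eqref{ce1.1}.

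For the existence of the limit, the natural route is to argue the expectation is monotone in $r=|x|/\eps$, hence in $\eps$, and bounded, so it converges. When $\lambda>\lambda_d$ we have $h\ge0$, so $\exp(-\int_0^{\tau_1}h(\rho_s)ds)\le 1$; the intuition is that as the starting point $r\to\infty$ the Bessel path conditioned on $\tau_1<\infty$ spends, in a suitable sense, a tighter distribution of occupation near level $1$... but cleaner is to use the Markov property at the hitting time of a fixed large level $R$: for $r>R$, conditioning on $\tau_1<\infty$ and using the strong Markov property at $\tau_R$, the expectation started from $r$ equals the expectation started from $R$ (the path from $r$ down to $R$ contributes $\exp(-\int h)$ over a region where $h$ is tiny, uniformly small as $R\to\infty$ by fact (b) and Lemma \ref{l13.5}(ii)), up to a factor tending to $1$. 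Making this rigorous — showing $\sup_{r>R}|E_r(\cdots)-E_R(\cdots)|\to 0$ as $R\to\infty$, so that $(E_r(\cdots))_r$ is Cauchy as $r\to\infty$ — is the main obstacle, and it is where fact (b) (the $\rho^{-2}$ decay of $h$) together with the uniform integrability from Lemma \ref{l13.5}(ii)–(iii) does the real work; the sign information (c) keeps the estimates one-sided and clean in each regime. I would then set $K_{\ref{p0.1}}(\lambda):=\lim_{r\to\infty}E_r^{(2+2\nu)}(\exp(-\int_0^{\tau_1}h(\rho_s)ds)\mid\tau_1<\infty)$, note it is independent of $x$ (only $|x|/\eps\to\infty$ matters), and observe that in the degenerate case $\lambda=\lambda_d$ the integrand is identically $1$ so $K_{\ref{p0.1}}(\lambda_d)=1$, consistent with \eqref{e1.1}.
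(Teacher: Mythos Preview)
Your scaling reduction to $f^\lambda(r):=E_r^{(2+2\nu)}\bigl(\exp(-\int_0^{\tau_1}h(\rho_s)\,ds)\,\bigm|\,\tau_1<\infty\bigr)$ with $h=U^{\lambda,1}-V^\infty$ and $r=|x|/\eps$ is exactly right, as is the idea of using the strong Markov property at $\tau_R$ together with the sign of $h$ to get monotonicity of $r\mapsto f^\lambda(r)$. The case $\lambda=\lambda_d$ is also handled correctly.

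The genuine gap is in fact~(b) and its use. You claim $|h(\rho)|\le C\rho^{-2}$ and then invoke Lemma~\ref{l13.5}(ii) ``with $q=2$''. But Lemma~\ref{l13.5}(ii) (and (iii)) explicitly requires $q>2$; when $q=2$, part~(i) of that lemma gives the \emph{exact} value
\[
E_r^{(2+2\nu)}\Bigl(\exp\Bigl(\int_0^{\tau_1}\frac{\gamma}{\rho_s^2}\,ds\Bigr)\Bigm|\tau_1<\infty\Bigr)=r^{\nu-\sqrt{\nu^2-2\gamma}},
\]
which diverges as $r\to\infty$ for any $\gamma>0$. So neither the claimed uniform upper bound (needed for $\lambda<\lambda_d$ and for \eqref{ce1.1}) nor the uniform positive lower bound (needed for $\lambda>\lambda_d$) can be obtained from an $O(\rho^{-2})$ estimate on $h$. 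The $\rho^{-2}$ decay is simply too slow; the exponential functional of $\int_0^{\tau_1}\rho_s^{-2}\,ds$ has heavy tails under this conditioning.

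What the paper does, and what your argument is missing, is to upgrade the decay of $h$ from $O(\rho^{-2})$ to $O(\rho^{-p})$ with $p>2$ (recall $p=p(d)\in\{3,2\sqrt 2,(1+\sqrt{17})/2\}$). This is done via the Feynman--Kac representation
\[
(U^{\lambda,1}-V^\infty)(x)=(U^{\lambda,1}-V^\infty)(R)\,E_x\Bigl(1_{(\tau_R<\infty)}\exp\Bigl(-\int_0^{\tau_R}\tfrac{U^{\lambda,1}+V^\infty}{2}(B_s)\,ds\Bigr)\Bigr),
\]
combined with Proposition~\ref{p20.1} (using $g\ge V^\infty$ when $\lambda>\lambda_d$) to obtain $(U^{\lambda,1}-V^\infty)(x)\le(\lambda-\lambda_d)|x|^{-p}$, and, for $\lambda<\lambda_d$, with a lower bound $U^{\lambda,1}\ge(1-\delta)V^\infty$ for large radii (from \cite{HMP18}) to obtain $(V^\infty-U^{\lambda,1})(x)\le C\,R^p|x|^{-p}$ for $|x|\ge R$ large. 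Only with this sharper $\rho^{-p}$ decay can one legitimately apply Lemma~\ref{l13.5}(ii)/(iii) (after a further Bessel rescaling $r\mapsto r/R$ to make the coefficient $\gamma$ small enough), and this is precisely the step that yields $0<\inf f^\lambda$ for $\lambda>\lambda_d$ and $\sup f^\lambda<\infty$ for $\lambda<\lambda_d$.
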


\begin{proof}
For  any $\lambda>0$ and any $r\geq 1$, we define 
  \begin{align}\label{ae3.1}
  f^\lambda(r)&:=E_{r}^{(2+2\nu)}\Big(\exp\big(-\int_0^{ \tau_{1}}  (U^{\lambda, 1}-V^\infty)(\rho_s) ds\big)\Big|\tau_{1}<\infty\Big).
\end{align}  
By Lemma \ref{l13.5}(i) and the definition of $\nu$ as in \eqref{ev1.5}, for any $\lambda>0$ we have 
\[f^\lambda(r)\leq  E_{r}^{(2+2\nu)}\Big(\exp\big(\int_0^{ \tau_{1}} V^\infty(\rho_s) ds\big)\Big|\tau_{1}<\infty\Big)=r^{\nu-|\mu|}<\infty, \forall r\geq 1. \]
For any $0<\eps<|x|$, we use the scaling of Bessel process and the scaling of $U^{\lambda\eps^{-2}, \eps}$ as in \eqref{ev5.7} to get
  \begin{align}\label{ae7.3}
  &E_{|x|}^{(2+2\nu)}\Big(\exp\big(-\int_0^{ \tau_{\eps}}  (U^{\lambda \eps^{-2}, \eps}-V^\infty)(\rho_s) ds\big)\Big|\tau_{\eps}<\infty\Big)\nn\\
  =&E_{|x|/\eps}^{(2+2\nu)}\Big(\exp\big(-\int_0^{ \tau_{1}}  (U^{\lambda, 1}-V^\infty)(\rho_s) ds\big)\Big|\tau_{1}<\infty\Big)=f^\lambda(|x|/\eps).
\end{align}
Then it suffices to find some finite constant $K_{\ref{p0.1}}(\lambda)>0$ such that $\lim_{r\to \infty} f^\lambda(r)=K_{\ref{p0.1}}(\lambda)$ to finish the proof of \eqref{ae13.5}.\\
 
  For any $r>R>1$, we have
\begin{align}\label{ae7.1}
f^\lambda(r)=&E_{r}^{(2+2\nu)}\Big(\exp\big(-\int_0^{ \tau_{1}}  (U^{\lambda, 1}-V^\infty)(\rho_s) ds\big)1(\tau_{1}<\infty)\Big) r^{2\nu}\nn\\
=&E_{r}^{(2+2\nu)} \Big( \exp\Big(-\int_0^{\tau_R}  (U^{\lambda,1}-V^\infty)(\rho_s)ds\Big)1(\tau_R<\infty)\Big) \nn\\
&\quad \quad \quad \quad \quad E_{R}^{(2+2\nu)} \Big( \exp\Big(-\int_0^{\tau_1}  (U^{\lambda,1}-V^\infty)(\rho_s) ds\Big) 1(\tau_1<\infty)\Big) r^{2\nu}\nn\\
= &E_{r}^{(2+2\nu)} \Big( \exp\Big(-\int_0^{\tau_R}  (U^{\lambda,1}-V^\infty)(\rho_s)ds\Big)\Big| \tau_R<\infty\Big) \nn\\
&\quad \quad \quad \quad \quad E_{R}^{(2+2\nu)} \Big( \exp\Big(-\int_0^{\tau_1}  (U^{\lambda,1}-V^\infty)(\rho_s) ds\Big)\Big|\tau_1<\infty\Big) \nn\\
= &E_{r}^{(2+2\nu)} \Big( \exp\Big(-\int_0^{\tau_R}  (U^{\lambda,1}-V^\infty)(\rho_s)ds\Big)\Big| \tau_R<\infty\Big)f^\lambda(R),
\end{align}
where the first and the third equalities follow by $P_{r}^{(2+2\nu)}(\tau_R<\infty)=(R/r)^{2\nu}$ for any $r>R>0$ and in the second equality we have used the strong Markov Property of Bessel process.
By \eqref{ev1.4} and \eqref{ev5.2}, for any $\lambda>0$ we obtain
 \[\frac{\Delta}{2} (U^{\lambda,1}-V^\infty)(x)=\frac{(U^{\lambda,1}+V^\infty)(x)}{2} (U^{\lambda,1}-V^\infty)(x), \forall |x|>1.\] Then the Feyman-Kac formula (as in (3.8) in \cite{MP17}) will give us that for  $|x|\geq R>1$,
\begin{align}\label{e1.0}
(U^{\lambda,1}-V^\infty)(x)=&(U^{\lambda,1}-V^\infty)(R)\nn\\
&E_x\Big(1_{(\tau_R<\infty)} \exp\Bigl(-\int_{0}^{\tau_R} \Bigl(\frac{U^{\lambda,1}+V^{\infty}}{2}\Bigr)(B_s)ds\Bigr)\Big),
\end{align}
where $B$ denotes a $d$-dimensional Brownian motion starting at $x$ under $P_x$ and \break $\tau_R=\inf \{t\geq 0: |B_t|\leq R\}$.\\

\no (i) If $\lambda=\lambda_d$, by \eqref{e1.2} and \eqref{ae3.1} we have $f^{\lambda_d}(r)\equiv 1$ for any $r>1$ and the result follows immediately by letting $K_{\ref{p0.1}}(\lambda_d)=1$.\\

\no (ii) If $\lambda>\lambda_d$, by \eqref{ae1.8} and \eqref{ae3.1}  we have $f^\lambda(r)\leq 1$ for all $r>1$. 
Apply \eqref{ae1.8} again in \eqref{ae7.1} to see that $f^{\lambda}(r)\leq f^{\lambda}(R)$ for any $r>R>1$ and so conclude 
%
$r\mapsto f^\lambda(r)$ is monotone decreasing for $r>1$. If we can show that $\inf_{r>1} f^\lambda(r)>0$, then we can find some constant  $K_{\ref{p0.1}}(\lambda)>0$ so that $$\inf_{r> 1} f^\lambda(r)=\lim_{r\to \infty} f^\lambda(r)=K_{\ref{p0.1}}(\lambda),$$ and the proof is complete. To see this, for any $|x|\geq R>1$, we apply \eqref{ae1.8} in \eqref{e1.0} to see that
\begin{align*}
(U^{\lambda,1}-V^\infty)(x)&\leq (U^{\lambda,1}-V^\infty)(R)E_x\Big(1_{(\tau_R<\infty)} \exp\Bigl(-\int_{0}^{\tau_R} V^{\infty}(B_s)ds\Bigr)\Big)\\
&=(U^{\lambda,1}-V^\infty)(R) R^p|x|^{-p},
\end{align*}
where the last equality uses Proposition \ref{p20.1} with $g=V^\infty$.
Let $R\downarrow 1$ to get 
\begin{align} \label{e1.6}
    (U^{\lambda,1}-V^\infty)(x)\leq (U^{\lambda,1}-V^\infty)(1)|x|^{-p}=(\lambda- \lambda_d) |x|^{-p}, \quad \forall |x|\geq 1.
\end{align} 
Apply the above bound to see that for all $r>R>1$,
\begin{align*}
&E_{r}^{(2+2\nu)} \Big( \exp\Big(-\int_0^{\tau_R}  (U^{\lambda,1}-V^\infty)(\rho_s)ds\Big)\Big| \tau_R<\infty\Big)\\
 \geq& E_{r}^{(2+2\nu)} \Big( \exp\Big(-\int_0^{\tau_R}  (\lambda-\lambda_d) \rho_s^{-p} ds\Big)\Big| \tau_R<\infty\Big) \\
=& E_{r/R}^{(2+2\nu)} \Big( \exp\Big(-\int_0^{\tau_1}   (\lambda-\lambda_d) R^{2-p} \rho_s^{-p}ds\Big)\Big| \tau_1<\infty\Big) \geq c_{\ref{l13.5}}(p,\nu),
\end{align*}
where in the last inequality we have chosen $R>1$ large so that $2(\lambda-\lambda_d) R^{2-p}<\nu^2$ and then applied Lemma \ref{l13.5}(iii). The equality is by scaling of Bessel process. Recalling \eqref{ae7.1}, we choose $R>1$ large and then use the above to get for all $r>R$,
\begin{align*}
f^\lambda(r)= &E_{r}^{(2+2\nu)} \Big( \exp\Big(-\int_0^{\tau_R}  (U^{\lambda,1}-V^\infty)(\rho_s)ds\Big)\Big| \tau_R<\infty\Big)f^\lambda(R) \geq  c_{\ref{l13.5}}(p,\nu) f^\lambda(R)>0.
 \end{align*}
Hence it follows that $\inf_{r>1} f^\lambda(r)>0$ and the proof is complete.\\

\no (iii) If $\lambda \in (0,\lambda_d)$, by \eqref{ae1.9} and \eqref{ae7.1} one can easily check that $f^{\lambda}(r)\geq f^\lambda(R)$ for any $r>R>1$ and so conclude $r\mapsto f^\lambda(r)$ is monotone increasing for $r>1$. If we show that $\sup_{r>1} f^\lambda(r)<\infty$, then we can find some constant  $K_{\ref{p0.1}}(\lambda)>0$ so that 
 \begin{align}\label{e20.1}
 \sup_{r> 1} f^\lambda(r)=\lim_{r\to \infty} f^\lambda(r)=K_{\ref{p0.1}}(\lambda).
 \end{align}
Moreover, the proof of \eqref{ce1.1} will also follow from \eqref{ae3.1}, \eqref{ae7.3} and \eqref{e20.1}.

It remains to prove $\sup_{r>1} f^\lambda(r)<\infty$. We first consider $0<\lambda<1$. By \eqref{e1.0} we have for all $|x|>R>1$,
 \begin{align}\label{ef1.9}
(V^\infty-U^{\lambda,1})(x)=(V^\infty-U^{\lambda,1})(R)E_x\Big(1_{(\tau_R<\infty)} \exp\Bigl(-\int_{0}^{\tau_R} \Bigl(\frac{U^{\lambda,1}+V^{\infty}}{2}\Bigr)(B_s)ds\Bigr)\Big).
\end{align}
Recall from (4.13) of \cite{HMP18} that for any $\lambda \in (0,1)$,
\begin{align}\label{ef1.1}
\forall \delta\in (0,1),\ \exists C_\delta>2,  \text{ so that } U^{\lambda,1}(x)\geq (1-\delta) V^{\infty}(x), \forall  |x|\geq C_\delta/\lambda. 
\end{align}
Then we can apply the proof of Proposition 4.6(b) in \cite{HMP18} with \eqref{ef1.1} and \eqref{ef1.9} to see that there exist some constants $C, K>0$ such that (cf. (4.14) of \cite{HMP18})
\begin{align}\label{e1.3}
(V^\infty-U^{\lambda,1})(x)\leq C (V^\infty-U^{\lambda,1})(R) (R/|x|)^p, \quad \forall |x|\geq R\geq \frac{K}{\lambda}>1.
\end{align}
Let $r>R>R_0=K/\lambda$, where $R$ will be chosen to be some large constant below. Now let  $\xi(R_0)=C(V^\infty-U^{\lambda,1})(R_0) R_0^p$ and apply the above bound to get for all $r>R>R_0>1$,
\begin{align}\label{ef1.2}
&E_{r}^{(2+2\nu)} \Big( \exp\Big(\int_0^{\tau_R}  (V^\infty-U^{\lambda,1})(\rho_s)ds\Big)\Big| \tau_R<\infty\Big)\nn\\
 \leq& E_{r}^{(2+2\nu)} \Big( \exp\Big(\int_0^{\tau_R}   \xi(R_0) \rho_s^{-p} ds\Big)\Big| \tau_R<\infty\Big)\nn \\
=& E_{r/R}^{(2+2\nu)} \Big( \exp\Big(\int_0^{\tau_1}   \xi(R_0) R^{2-p} \rho_s^{-p}ds\Big)\Big| \tau_1<\infty\Big) \leq C_{\ref{l13.5}}(p,\nu),
\end{align}
where in the last inequality we have chosen $R>1$ large so that $2\xi(R_0) R^{2-p}<\nu^2$ and then applied Lemma \ref{l13.5}(ii). Recalling \eqref{ae7.1}, by choosing $R>1$ large, we have for all $r>R$,
\begin{align*}
f^\lambda(r)= &E_{r}^{(2+2\nu)} \Big( \exp\Big(-\int_0^{\tau_R}  (U^{\lambda,1}-V^\infty)(\rho_s)ds\Big)\Big| \tau_R<\infty\Big)f^\lambda(R) \\
\leq &  C_{\ref{l13.5}}(p,\nu) f^\lambda(R)<\infty,
 \end{align*}
The last follows since
 \begin{align*}
 f^\lambda(R)\leq E_{R}^{(2+2\nu)} \Big( \exp\Big(\int_0^{\tau_1} V^\infty(\rho_s) ds\Big)\Big|\tau_1<\infty\Big)=R^{\nu-\sqrt{\nu^2-4(4-d)}}<\infty,
 \end{align*}
 where we have used  Lemma \ref{l13.5}(i) in the equality. Hence it follows that $\sup_{r>1} f^\lambda(r)<\infty$ for any $0<\lambda<1$.
 
  If $1\leq \lambda<\lambda_d$, we have $\lambda\geq 1/2$ and so by definition of $U^{\lambda,1}$ as in \eqref{eb1.0}, we have $U^{\lambda,1}(x)\geq U^{1/2,1}(x)$ for all $|x|>1$. It follows from the definition of $ f^\lambda(r)$(recall \eqref{ae3.1}) that
 \[\sup_{r> 1} f^\lambda(r)\leq \sup_{r> 1} f^{1/2}(r)<\infty.\]
Now we conclude that for any $0<\lambda<\lambda_d$, we have $\sup_{r>1} f^\lambda(r)<\infty$ and so the proof is complete.
 \end{proof}
 We are ready to finish the Proof of Proposition \ref{p0.1}.
\begin{proof}[Proof of Proposition \ref{p0.1}]
The proof of \eqref{e9.1.0} is immediate by \eqref{ae1.0} and Lemma \ref{ac13.5}. Turning to $\P_{X_0}$, for any $x\in S(X_0)^c$, we let $\delta:=d(x, S(X_0))>0$ and $0<\eps<\delta/2$.
Recall from \eqref{ev5.4} that 
\begin{align}\label{e1.2.4}
\E_{X_0}\Big(\exp(-\lambda \frac{X_{G_\eps^x}(1)}{\eps^{2}}) \Big)=  \exp\Big(-\int U^{\lambda \eps^{-2}, \eps}(x-y) X_0(dy)\Big).
\end{align}
Recalling \eqref{ae12.2} and \eqref{e1.1}, we may conclude
   \begin{align}\label{ea1.3}
\frac{1}{\eps^{p-2}} U_1^{\lambda\eps^{-2}, \eps}(x)\leq \frac{1}{\eps^{p-2}} U_1^{\lambda_d\eps^{-2}, \eps}(x)= |x|^{-p}\text{ for all } |x|>\eps,  \text{ if } \lambda\geq \lambda_d.
  \end{align}
If $ 0<\lambda<\lambda_d,$ then \eqref{ae1.0} and \eqref{ce1.1} imply that
   \begin{align}\label{ea1.4}
\frac{1}{\eps^{p-2}} U_1^{\lambda\eps^{-2}, \eps}(x)\leq K_{\ref{p0.1}}(\lambda) |x|^{-p}, \forall |x|>\eps.
  \end{align}
Monotone convergence and the convexity of $e^{-ax}$ for $a,x>0$ allow us to differentiate \eqref{e1.2.4} with respect to $\lambda>0$ through the expectation such that for any $\lambda>0$,
\begin{align}\label{e1.2.7}
\E_{X_0}\Big(\frac{X_{G_\eps^x}(1)}{\eps^2}&\exp(-\lambda \frac{X_{G_\eps^x}(1)}{\eps^{2}}) \Big)\nn\\
&=   \int U_1^{\lambda \eps^{-2}, \eps}(y-x) X_0(dy)\exp\Big(-\int U^{\lambda \eps^{-2}, \eps}(y-x) X_0(dy)\Big).
\end{align}
The differentiation through the integral with respect to $X_0$ on the right-hand side of \eqref{e1.2.4} follows since $|y-x|\geq \delta$ for all $y\in S(X_0)$ and by \eqref{ea1.3} and \eqref{ea1.4}, for any $\lambda>0$ we have $\eps^{-(p-2)}U_1^{\lambda \eps^{-2}, \eps}(x)$ is uniformly bounded for all $|x|\geq\delta$. The same reasoning allows us to take limit inside the integral and use \eqref{e9.1.0} and \eqref{ae12.2} to get
\begin{align}\label{e1.2.8}
\lim_{\eps\downarrow 0}\int \frac{1}{\eps^{p-2}}U_1^{\lambda \eps^{-2}, \eps}(y-x) X_0(dy)=\int K_{\ref{p0.1}}(\lambda) |y-x|^{-p} X_0(dy).
\end{align}
 Next by \eqref{e1.2}, \eqref{ae1.8}, \eqref{ae1.9} and \eqref{e1.6}, \eqref{e1.3}, one may easily conclude that (see also (3.10) of \cite{MP17}) 
\begin{align}
\lim_{|x|\to \infty} \frac{U^{\lambda,1}(x)}{V^\infty(x)}=1, \forall \lambda>0.
\end{align}
The scaling of $U^{\lambda \eps^{-2}, \eps}(x)$ as in \eqref{ev5.7} then gives for any $\lambda>0$, 
\begin{align}
 \lim_{\eps \downarrow 0} U^{\lambda \eps^{-2},\eps}(x)=\lim_{\eps \downarrow 0} \eps^{-2}U^{\lambda ,1}(x/\eps)=V^\infty(x), \forall x\neq 0.
\end{align}
Hence it follows that
\begin{align}\label{e1.2.9}
\lim_{\eps \downarrow 0} \int U^{\lambda \eps^{-2}, \eps}(y-x) X_0(dy)=\int V^{\infty}(y-x) X_0(dy),
\end{align}
and the proof of \eqref{ea9.1.0} is complete by \eqref{e1.2.7}, \eqref{e1.2.8} and \eqref{e1.2.9}.\\

\no If $\lambda=\lambda_d$, then \eqref{e9.4.3} will follow immediately from \eqref{e1.1}. The proof of \eqref{ea9.4.3} is also immediate from \eqref{e1.2.7}, \eqref{ev5.7} and \eqref{e1.1}.
\end{proof}
The following is an easy application of Proposition \ref{p0.1} and could be of independent interests. 
\begin{corollary}
For any $X_0\in M_F$ and any $x\in S(X_0)^{c}$, the family \[\Big\{\frac{X_{G_{r_0-r}^x}(1)}{(r_0-r)^p} \exp(-\lambda_d \frac{X_{G_{r_0-r}^x}(1)}{(r_0-r)^{2}})\Big\} \text{ indexed by } 0\leq r<r_0=d(x,S(X_0))/2\] is a martingale under $\N_{X_0}$ or $\P_{X_0}$.
\end{corollary}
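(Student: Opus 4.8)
The plan is to read the martingale property off the special Markov property together with the $\lambda=\lambda_d$ evaluation in Proposition~\ref{p0.1}; the argument is exactly parallel to the proof that $\{X_{G_{r_0-r}^x}(1)\psi_0(r_0-r)\}$ is a martingale contained in the proof of Theorem~\ref{t0} (the $d=3$ part), with \eqref{ev1.2.1} replaced by \eqref{ea9.4.3}. Write $r_0=d(x,S(X_0))/2$, and for $0\le r<r_0$ set $\eps(r)=r_0-r\in(0,r_0]$ and
\[
Y_r:=\frac{X_{G_{\eps(r)}^x}(1)}{\eps(r)^{p}}\exp\Bigl(-\lambda_d\frac{X_{G_{\eps(r)}^x}(1)}{\eps(r)^{2}}\Bigr).
\]
Since $\eps(r)\le r_0<d(x,S(X_0))$ we have $G_{\eps(r)}^x\in\cO_{X_0}$ for every $r$, so each $X_{G_{\eps(r)}^x}$ is well defined and $\cE_{G_{\eps(r)}^x}$-measurable; as $r$ increases $G_{\eps(r)}^x$ increases, so $(\cE_{G_{\eps(r)}^x})_{0\le r<r_0}$ is a filtration, and we work throughout with the cadlag version of $r\mapsto X_{G_{\eps(r)}^x}(1)$. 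Integrability of $Y_r$ is immediate: under $\P_{X_0}$ from \eqref{ea9.4.3} and under $\N_{X_0}$ from \eqref{e9.4.3} and translation invariance, both giving $\N_{X_0}(Y_r)=\int|y-x|^{-p}\,X_0(dy)\le (2r_0)^{-p}X_0(1)<\infty$ (note this is independent of $r$, as it must be).

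First I would fix $0\le r_1<r_2<r_0$ and put $\eps_i=\eps(r_i)$, so $0<\eps_2<\eps_1\le r_0$. Then $\overline{G_{\eps_1}^x}=\{|y-x|\ge\eps_1\}\subset G_{\eps_2}^x$ and $d\bigl((G_{\eps_2}^x)^c,\overline{G_{\eps_1}^x}\bigr)=\eps_1-\eps_2>0$, so Proposition~\ref{pv0.2}(ii) under $\N_{X_0}$ (resp.\ Proposition~\ref{pv0.1}(b) under $\P_{X_0}$), applied with $G_1=G_{\eps_1}^x$, $G_2=G_{\eps_2}^x$, yields
\[
\N_{X_0}\bigl(Y_{r_2}\,\big|\,\cE_{G_{\eps_1}^x}\bigr)=\E_{X_{G_{\eps_1}^x}}\Bigl(\frac{X_{G_{\eps_2}^x}(1)}{\eps_2^{p}}\exp\bigl(-\lambda_d\eps_2^{-2}X_{G_{\eps_2}^x}(1)\bigr)\Bigr).
\]
The key observation is that, by \eqref{ea1.1}, the random initial measure $\mu:=X_{G_{\eps_1}^x}$ is ($\N_{X_0}$-a.e., resp.\ $\P_{X_0}$-a.s.) supported on the sphere $\partial G_{\eps_1}^x=\{|y-x|=\eps_1\}$, so $d(x,S(\mu))=\eps_1>\eps_2$ off a null set, and on that sphere $V^\infty(\cdot-x)\equiv\lambda_d\eps_1^{-2}$ and $|\cdot-x|^{-p}\equiv\eps_1^{-p}$ are constants. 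Applying \eqref{ea9.4.3} with initial measure $\mu$, $\lambda=\lambda_d$, $\eps=\eps_2$ therefore collapses the right-hand side to
\[
e^{-\int V^\infty(y-x)\mu(dy)}\int|y-x|^{-p}\,\mu(dy)=e^{-\lambda_d\eps_1^{-2}\mu(1)}\,\eps_1^{-p}\mu(1)=Y_{r_1}
\]
(the case $\mu=0$ being trivial), which is precisely $\N_{X_0}(Y_{r_2}\mid\cE_{G_{\eps_1}^x})=Y_{r_1}$; the computation under $\P_{X_0}$ is identical. Together with adaptedness and integrability this gives the martingale property.

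I do not expect a genuine obstacle. The two points requiring care are the verification that $(G_{\eps_1}^x,G_{\eps_2}^x)$ meets the hypotheses of the special Markov property (checked above: both lie in $\cO_{X_0}$, $\overline{G_{\eps_1}^x}\subset G_{\eps_2}^x$, positive gap) and, slightly more delicately, the justification that \eqref{ea9.4.3} may be applied to the \emph{random} measure $X_{G_{\eps_1}^x}$ for almost every realization---i.e.\ that $\eps_2<d(x,S(X_{G_{\eps_1}^x}))$ outside a null set. Both reduce to the single fact, recorded in \eqref{ea1.1}, that $X_{G_{\eps_1}^x}$ charges only the sphere of radius $\eps_1$ about $x$; once this is in hand, the identities $V^\infty\equiv\lambda_d\eps_1^{-2}$ and $|\cdot-x|^{-p}\equiv\eps_1^{-p}$ on that sphere are exactly what make the constants in \eqref{ea9.4.3} recombine into $Y_{r_1}$.
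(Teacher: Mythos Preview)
Your proposal is correct and follows exactly the paper's own proof: apply the special Markov property (Proposition~\ref{pv0.2}(ii) under $\N_{X_0}$, Proposition~\ref{pv0.1}(b) under $\P_{X_0}$) with $G_1=G_{\eps_1}^x$, $G_2=G_{\eps_2}^x$, evaluate the resulting $\E_{X_{G_{\eps_1}^x}}$ via \eqref{ea9.4.3}, and then collapse the integrals using that $X_{G_{\eps_1}^x}$ is supported on the sphere $\{|y-x|=\eps_1\}$. Your write-up is in fact more careful than the paper's in checking integrability and the hypotheses on $(G_1,G_2)$.
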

\begin{proof}
%
For any $X_0\in M_F$, we fix any $x\in S(X_0)^c$. Let $r_0=d(x,S(X_0))/2$. For any $0<\eps_2<\eps_1<r_0$, we apply Proposition \ref{pv0.2}(ii) with $G_1=G_{\eps_1}^x$ and $G_2=G_{\eps_2}^x$ to get 
 \begin{align}\label{e4.2}
 &\N_{X_0}\Big( \frac{X_{G_{\eps_2}^x}(1)}{\eps_2^p}  \exp(-\lambda_d \frac{X_{G_{\eps_2}^x}(1)}{{\eps_2}^{2}})\Big| \cE_{G_{\eps_1}^x}  \Big)= \E_{X_{G_{\eps_1}^x}} \Big(\frac{X_{G_{\eps_2}^x}(1)}{\eps_2^p}  \exp(-\lambda_d \frac{X_{G_{\eps_2}^x}(1)}{{\eps_2}^{2}})  \Big)\nn\\
    =& \int |y-x|^{-p} X_{G_{\eps_1}^x}(dy) \exp\Big(-\int V^{\infty}(y-x)X_{G_{\eps_1}^x}(dy)\Big)\nn\\
    =&  \frac{X_{G_{\eps_1}^x}(1)}{\eps_1^{p}} \exp\Big(- \lambda_d \frac{X_{G_{\eps_1}^x}(1)}{\eps_1^2}\Big),
   \end{align}
   where the second line is by \eqref{ea9.4.3}.
 Therefore we conclude \[\Big\{\frac{X_{G_{r_0-r}^x}(1)}{(r_0-r)^p} \exp(-\lambda_d \frac{X_{G_{r_0-r}^x}(1)}{(r_0-r)^{2}}), 0\leq r<r_0\Big\}\] is a martingale under $\N_{X_0}$. The case for $\P_{X_0}$ follows in a similar way.
\end{proof}

\bibliographystyle{plain}
\def\cprime{$'$}

 \end{document}